\documentclass[12pt]{amsart}
\usepackage{amsfonts}

\textwidth 160 mm
\textheight 230 mm
\hoffset=-15 mm
\voffset=-5 mm

\newtheorem{theorem}{Theorem}[section]
\newtheorem{corollary}[theorem]{Corollary}

\newtheorem{lemma}[theorem]{Lemma}
\newtheorem{proposition}[theorem]{Proposition}
\newtheorem{example}[theorem]{Example}

\newtheorem{remark}[theorem]{Remark}
\newtheorem{remarks}[theorem]{Remarks}
\newtheorem{definition}[theorem]{Definition}

\def\Cc{\hbox{\sf C\kern -.47em {\raise .48ex \hbox{$\scriptscriptstyle |$}}
   \kern-.5em {\raise .48ex \hbox{$\scriptscriptstyle |$}} }}

\newcommand{\la}{\lambda}
\newcommand{\si}{\sigma}

\def\card{{\rm card\,}}

\newcommand{\be}{\begin{equation}}
\newcommand{\ee}{\end{equation}}

\newcommand{\CC}{\mathbb{C}} 
\newcommand{\NN}{\mathbb{N}}
\newcommand{\RR}{\mathbb{R}}
\newcommand{\e}{\varepsilon}

\newcommand{\de}{\delta}

\def\la{\lambda}

\begin{document}

\baselineskip 5.8mm

\title[Spectral theory in max algebra]
{On some aspects of spectral theory for infinite bounded non-negative matrices in max algebra}

\author{Vladimir M\"uller, Aljo\v{s}a Peperko}

\begin{abstract}
\baselineskip 7mm

Several spectral radii formulas for infinite bounded nonnegative matrices in max algebra are obtained. We also prove some Perron-Frobenius type results for such matrices. In particular, we obtain results on block triangular forms, which are similar to results on Frobenius normal form of $n \times n$ matrices. Some continuity results are also established.
\end{abstract}

\maketitle

\noindent
{\it Math. Subj.  Classification (2010)}: 15A18, 15A80, 47J10, 15A60, 15B48, 47H07.\\ 
{\it Key words}: 
non-negative matrices; infinite bounded matrices; max algebra; Bonsall's cone spectral radius; eigenvalues; 
 continuity.
\\

\section{Introduction}
\vspace{1mm}

The algebraic system max algebra and its isomorphic versions (max-plus algebra, tropical algebra) provide 
an attractive way of describing a class of non-linear problems appearing for instance in manufacturing and transportation scheduling, information 
technology, discrete event-dynamic systems, combinatorial optimization, mathematical physics, DNA analysis, ...(see e.g. \cite{Bu10, 
BCOQ92, G92, Heidergott, B98, BSD95, GMW18, P11, Z} and the references cited there).
Max algebra's usefulness arises from a fact that 
these non-linear problems become linear when described in the max algebra language. Moreover, max algebra techniques were used to solve
certain linear algebra and graph theoretical problems (see e.g. \cite{ED08, MP12, GPZ}). In particular, tropical polynomial methods 
improved the accuracy of the numerical computation of the eigenvalues of a matrix polynomial (see e.g. \cite{ABG04, ABG05, GS08, AGS13,  BNS13} 
 and the references cited there).

The max algebra consists of the set of 
non-negative numbers with sum $a \oplus b = \max\{a,b\}$ and the standard product $ab$, where $a,b \ge 0$. 
A matrix $A=[A_{ij}]_{i,j=1} ^ n $ is non-negative, if  $A_{ij} \ge 0$ for all $i,j \in \{1,2,\ldots, n\}$. 
Let $\RR^{n \times n}$ ($\CC^{n \times n}$) be the set of all $n \times n$ real (complex) matrices and  
$\RR^{n \times n} _+$ the set of all $n \times n$ non-negative matrices. The entries of a matrix are also denoted by $a_{ij}$, $a_{i,j}$ or $A_{i,j}$.
The operations between matrices and vectors in  max algebra are defined by analogy with the usual linear algebra. 
The product of  non-negative matrices $A$ and $B$ 
in max algebra is denoted by $A \otimes B$, where $(A\otimes B)_{ij}=\max _{k=1, \ldots , n }A_{ik}B_{kj}$ and the sum $A\oplus B$ in  max algebra
is defined by $(A\oplus B)_{ij}= \max\{A_{ij}, B_{ij}\}$. The notation
$A^2 _{\otimes}$ means $A \otimes A$, and $A^k _{\otimes}$ denotes the $k$-th max power of $A$. If $x=(x_i) _{i= 1, \ldots , n}$ is a non-negative 
 vector, then the notation $A \otimes x$ means 
$(A \otimes x)_i =\max _{j =1, \ldots , n}A_{ij}x_j$. The usual
associative and distributive laws hold in this algebra. 

The role of the spectral radius of $A\in \RR^{n\times n} _+$ in max algebra is played by the maximum cycle geometric mean $r(A)$, which is defined by
\be
r(A)=\max\Bigl\{(A_{i_1 i_k}\cdots A_{i_3 i_2}A_{i_2 i_1})^{1/k}: k \in \mathbb{N} \;\;\mathrm{and}\;\; i_1,\dots,i_k\in\{1,\dots,n\}\Bigr\}
\label{cgm}
\ee
and equal to
$$r(A)=\max\Bigl\{(A_{i_1 i_k}\cdots A_{i_3 i_2}A_{i_2 i_1})^{1/k}: k \le n \;\;\mathrm{and}\;\; i_1,\dots,i_k\in\{1,\dots,n\} \;\;
\mathrm{mutually} \;\; \mathrm{distinct} 
\Bigr\}.$$

A digraph $\mathcal{G}(A)= (N(A),E(A))$ associated to $A\in \RR^{n\times n} _+$ is defined  by setting $N(A) =\{ 1,...,n\}$ and letting $( i,j) \in E(A)$ whenever $A_{ij} > 0$. 
When this digraph contains at least one cycle, one distinguishes critical cycles, where the maximum in (\ref{cgm}) is attained. A graph with just one node and no edges will be called trivial.
A bit unusually, but in consistency with \cite{Bu10, BGC-G09, KSS12}, a matrix   $A\in \RR^{n\times n} _+$ is called irreducible if  $\mathcal{G}(A)$  is trivial ($A$ is $1\times 1$ zero matrix) or strongly connected (for each $i,j \in N(A)$, $i\neq j$ there
is a path in $\mathcal{G}(A)$  that starts in $i$ and ends in $j$).


There are many different descriptions of the maximum cycle geometric mean $r (A)$ (see e.g. \cite{EJS88, Bu10, P08, P06, MP12} and the references cited there). 
It is known that $r(A)$ is the largest max eigenvalue of $A$, i.e., $r(A)$ is the largest
$\la\ge 0$  for which there 
exists $x\in\RR_+^n$, $x \neq 0$ with $A\otimes x=\la x$.

Moreover, if $A$ is irreducible, then $r(A)$ is the 
unique max eigenvalue and every max eigenvector is positive (see e.g. \cite[Theorem 2]{B98}, \cite{Bu10, BCOQ92, BSD93}).
Also, the max version of the Gelfand formula holds for any  $A\in \RR^{n\times n} _+$, i.e.,  
\be 
r(A)= \lim _{m \to \infty} \|A^m _{\otimes} \|^{1/m}
\label{ed}
\ee
for an arbitrary vector norm $\|\cdot \|$ on $\RR^{n \times n}$ (see e.g. \cite{P08} and the references cited there).




 An eigenproblem in max-algebra 
and its isomorphic versions (and an eigenproblem for more general maps) has already received a lot of attention (see e.g.  \cite{B98, Bu10, Heidergott, BGC-G09, AGW04, MN02, KM97, S07} and the
 references  cited there). The results can be applied in different contexts, for instance in optimal control problems (here the max eigenvectors correspond to stationary solutions of the dynamic programming equations and the max eigenvalues correspond to the maximal ergodic rewards per time unit), in the study of discrete event systems, in statistical mechanics, in the study of delay systems, ... (see e.g. \cite{AGW04, MN02, BGC-G09} and the references cited there).
 
 Also infinite dimensional extensions of  spectral theory in max algebra (and more general settings) have already received substantial attention (see e.g. \cite{MN02, MP17, MP18, AGW04, MN10, N81, N86, Appel, LN11, KM97, LM05, S07} and the references cited there). In this article we continue this investigation by focusing on infinite bounded nonnegative matrices and their spectral properties in max algebra. The article is organized in the following way.
 
 In Section 2 we recall some definitions and results from \cite{MP17, MP18}, which are relevant in the rest of the article. In Section 3 we prove several spectral radii formulas for  infinite bounded nonnegative matrices in max algebra and prove some Perron-Frobenius type results for such matrices. In Section 4 we prove results on block triangular forms which are similar to results on Frobenius normal form of $n\times n$ nonnegative matrices \cite{B98, Bu10, Heidergott, BGC-G09}. We conclude the article with some continuity results in Section 5.

\vspace{3mm}

\section{Preliminaries}


An infinite (entrywise) non-negative matrix  $A=(a_{ij})_{i,j=1}^\infty = (a_{i,j})_{i,j=1}^\infty$ is called bounded if 
$$
\|A\|=\sup\{a_{ij}: i,j\in\NN\} < \infty. 
$$
Let  $\RR_+^{\infty\times \infty}$ denote the set of all  infinite bounded non-negative matrices. 
For $A, B \in \RR_+^{\infty\times \infty}$ and $x \in l^{\infty} _+$ we denote by $\oplus$ and $\otimes$ the sum and the product in max algebra, respectively, i.e., for $i, j \in \NN$ let 
$$(A\oplus B)_{ij} = \max \{a_{ij}, b_{ij}\}, \;\;(A \otimes B)_{ij} = \sup _{k \in \NN} a_{ik}b_{kj}, \;\; (A \otimes x)_i = \sup_{j\in\NN} a_{ij} x_j.$$
Let $A^k _{\otimes}$ denote the $k$-th power in max algebra. Let us point out that $\otimes$ here does not denote the tensor product.

Let $\{e_1,e_2,\dots \}$ be the standard basis in $\ell^\infty_+$. Then  
$$\|A\|=\sup _{j\in \NN} \|A \otimes e_j\| = \sup _{\|x\|=1, x\in  l^{\infty} _+} \|A \otimes x\| = \sup_{ x\in  l^{\infty} _+ , x\neq 0}  \frac{ \|A \otimes x\| }{\|x\|}$$
and $\|A\otimes B\| \le \|A\|\cdot\|B\|$, where $\|x\|=\sup _{i \in \mathbb{N}}|x_i|$ for $x \in l^{\infty}$.


For $i_0, i_1,i_2,\dots,i_k\in\NN$ let 
$$
A(i_k,\dots,i_0)=\prod_{t=0}^{k-1} a_{i_{t+1} i_t}.
$$

It is easy to see that 
$$
\|A^k _{\otimes}\|=\sup\{A(i_k,\dots,i_0):i_0,\dots,i_k\in\NN\}
$$
and
$$
\|A^{k+j} _{\otimes}\|\le\|A^k _{\otimes}\|\cdot \|A^j _{\otimes}\|
$$
for all $k,j\in\NN$.
It is well known that this implies that the sequence $\|A^k _{\otimes}\|^{1/k}$ is convergent and its limit equals to the infimum. The limit is called the spectral radius in max-algebra (the Bonsall cone spectral radius of the map $g_A: x \mapsto A\otimes x$ on the cone $l^{\infty} _+ $) and denoted by $r(A)$. Observe that the map 
$g_A: l^{\infty} _+ \to l^{\infty} _+ $ is Lipschitz with the Lipschitz constant $\|A\|$. For some theory on Bonsall's cone spectral radius see e.g. \cite{MN02, MP17, MP18, MN10, N81}.

For $x\in\ell^\infty_+$ let $r_x(A)=\limsup_{k \to\infty}\|A _{\otimes} ^k \otimes x\|^{1/k}$ be the local spectral radius of $A$ at $x$ in max algebra. It is easy to see (and known) that $r(A) =r_y (A)$ where $y=(1,1,1, \cdots)$.
The approximate point spectrum $\sigma_{ap}(A)$ in max algebra is defined as the set of all $t\ge 0$ such that 
$$
\inf\{\|A \otimes x-tx\|:x\in l^{\infty} _+,\|x\|=1\}=0.
$$
The point spectrum $\sigma_{p}(A)$ in max algebra is defined as the set all $t\ge 0$ such that $A \otimes x=tx$ for some $x\in l^{\infty} _+$, $x\neq 0$. Clearly $\sigma_{p}(A) \subset \sigma_{ap}(A)$.

Let
$m(A)=\sup_j r_{e_j} (A)$ and so $m(A) \le r(A)$. Let $s(A)=\inf\{\|Ax\|:x\in  l^{\infty} _+, \|x\|=1\}$ be the minimum modulus of $A$ and let $ d(A)=\lim_{n\to\infty}s(A^n)^{1/n}$
be the lower spectral radius of $A$ (see \cite{MP18}).
The following result was proved in \cite[Corollaries 2 and 3]{MP17}, \cite[Proposition 3.1, Theorem 3.5 and Example 4.13]{MP18}.

\begin{theorem} Let $A$ be an infinite bounded non-negative matrix.
Then
 
(i) $[m(A), r(A)] \subset \si_{ap}(A) \subset [d(A), r(A)]$, 

(ii) $r_x (A) \in  \si_{ap}(A)$ for all  $x\in\ell^\infty_+$, $x\neq 0$, 

(iii) $d(A) = \min \{ t:  t \in \si_{ap}(A)\}$ and $r(A) = \max \{ t:  t \in \si_{ap}(A)\}$.
\end{theorem}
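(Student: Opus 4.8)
The plan is to prove (ii) first — it carries the analytic weight — and then to read off (i) and (iii) from it by mostly soft arguments. One preliminary fact used throughout is that $\sigma_{ap}(A)$ is closed: if $t_n\to t$ with $t_n\in\sigma_{ap}(A)$, choose unit vectors $x^{(n)}\in\ell^{\infty}_+$ with $\|A\otimes x^{(n)}-t_n x^{(n)}\|<1/n$; then $\|A\otimes x^{(n)}-t x^{(n)}\|<1/n+|t-t_n|\to 0$, so $t\in\sigma_{ap}(A)$.

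For (ii), fix $x\in\ell^{\infty}_+$, $x\neq 0$, put $t=r_x(A)$ and $u_k=A^k_\otimes\otimes x$, so $u_{k+1}=A\otimes u_k$ and $\limsup_k\|u_k\|^{1/k}=t$. If $u_m=0$ for some $m$, then $u_{m+1}=0$; taking the last nonzero iterate $u_\ell$, the unit vector $u_\ell/\|u_\ell\|$ has $A\otimes(u_\ell/\|u_\ell\|)=0$, and since $t=0$ here, $t\in\sigma_{ap}(A)$. If all $u_k\neq 0$ but $t=0$, the comparison $\liminf_k\|u_{k+1}\|/\|u_k\|\le\limsup_k\|u_k\|^{1/k}=0$ gives unit vectors $v_k=u_k/\|u_k\|$ with $\|A\otimes v_k\|=\|u_{k+1}\|/\|u_k\|\to 0$ along a subsequence, so $0=t\in\sigma_{ap}(A)$. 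The substantial case is $t>0$: one must construct, for each $\varepsilon\in(0,t)$, a unit vector $z_\varepsilon$ with $\|A\otimes z_\varepsilon-t z_\varepsilon\|$ small, and then let $\varepsilon\to 0$ and use closedness of $\sigma_{ap}(A)$. The natural first attempt is the resolvent-type vector $w=\bigoplus_{k\ge 0}u_k/(t+\varepsilon)^k$, which converges in $\ell^{\infty}_+$ because $\|u_k\|\le(t+\varepsilon)^k$ for large $k$, dominates $x$, and satisfies $A\otimes w=(t+\varepsilon)\bigoplus_{k\ge 1}u_k/(t+\varepsilon)^k\le(t+\varepsilon)w$ together with $w=x\oplus\frac{1}{t+\varepsilon}(A\otimes w)$; thus $(A\otimes w)_i=(t+\varepsilon)w_i$ except at coordinates where the $k=0$ term $x_i$ dominates. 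I expect the main obstacle to be exactly that the norm of $w$ may be carried by such ``bad'' coordinates — already for the shift $S$ with $S_{i+1,i}=1$ and zeros elsewhere and $x=e_1$ one has $\|u_k\|=1$ for all $k$ while $w$ is useless — so the bare geometric series does not suffice. One must instead tune the weight sequence and the length of the iteration window (morally, a ramp up to a plateau on which $g_A$ acts almost as multiplication by $t$), exploiting that $\limsup_k\|u_k\|^{1/k}=t$ supplies both $\|u_k\|\le(t+\varepsilon)^k$ eventually and $\|u_k\|\ge(t-\varepsilon)^k$ infinitely often, in order to bound the boundary discrepancy uniformly in the coordinate. (Alternatively, one works with the finite sections $A_N$ of $A$, uses finite-dimensional Perron–Frobenius theory to obtain genuine eigenvectors, and passes to the limit — the analogous difficulty being to keep the eigenvectors' mass away from the truncation boundary.)

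For (i), the inclusion $\sigma_{ap}(A)\subset[d(A),r(A)]$ comes from iterating the defining relation: if $\|x\|=1$ and $\|A\otimes x-tx\|<\delta$, then, since $g_A$ is $\|A\|$-Lipschitz and positively homogeneous, an easy induction gives $\|A^k_\otimes\otimes x-t^k x\|\le c_k\delta$ with $c_k=\sum_{j=0}^{k-1}\|A\|^{k-1-j}t^j$; hence $t^k-c_k\delta\le\|A^k_\otimes\otimes x\|\le t^k+c_k\delta$, so, letting $\delta\to 0$, $s(A^k)\le t^k\le\|A^k_\otimes\|$, and taking $k$-th roots and $k\to\infty$ yields $d(A)\le t\le r(A)$. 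The reverse inclusion $[m(A),r(A)]\subset\sigma_{ap}(A)$ I would reduce, via (ii) and closedness of $\sigma_{ap}(A)$, to showing that every $t\in(m(A),r(A))$ is of the form $r_x(A)$ for some $x\neq 0$ (the value $r(A)$ being realized by $x=(1,1,1,\dots)$, since $r_{(1,1,\dots)}(A)=r(A)$): using $\|A^k_\otimes\|=\sup_j\|A^k_\otimes\otimes e_j\|\ge r(A)^k$ one picks, for each $k$, an index $j_k$ with $\|A^k_\otimes\otimes e_{j_k}\|$ comparable to $r(A)^k$ and assembles $x$ from the $e_{j_k}$ with weights decaying at rate $t/r(A)<1$, so that $\|A^k_\otimes\otimes x\|^{1/k}\to t$; the subtle point is once more to choose the $j_k$ and the weights so that the iterates issuing from the far-away coordinates — whose local spectral radii are all $\le m(A)<t$, but only eventually and with no uniform rate — do not push $\|A^k_\otimes\otimes x\|^{1/k}$ above $t$.

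Finally, (iii) follows from (i) and (ii) once $d(A)\in\sigma_{ap}(A)$ is established: the inclusion in (i) gives $\min\sigma_{ap}(A)\ge d(A)$ and $\max\sigma_{ap}(A)\le r(A)$, while $r(A)\in[m(A),r(A)]\subset\sigma_{ap}(A)$ (equivalently $r(A)=r_{(1,1,\dots)}(A)\in\sigma_{ap}(A)$ by (ii)) gives $\max\sigma_{ap}(A)=r(A)$. For $d(A)\in\sigma_{ap}(A)$, hence $\min\sigma_{ap}(A)=d(A)$, one runs the construction dual to that in (ii): for large $n$ choose a unit vector $x_n$ with $\|A^n_\otimes\otimes x_n\|^{1/n}$ below $d(A)+\varepsilon$ (possible since $s(A^n)^{1/n}\to d(A)$) and form $z_n=\bigoplus_{k=0}^{n-1}(A^k_\otimes\otimes x_n)/(d(A)+\varepsilon)^k$; then $A\otimes z_n$ exceeds $(d(A)+\varepsilon)z_n$ only by the tail term, which is $\le\|A^n_\otimes\otimes x_n\|/(d(A)+\varepsilon)^{n-1}\to 0$, and drops below $(d(A)+\varepsilon)z_n$ only where the initial term $x_n$ dominates — the same boundary effect, controlled by arranging $x_n$ to place its mass on coordinates genuinely contracted by $A$ — so $d(A)\in\overline{\sigma_{ap}(A)}=\sigma_{ap}(A)$.
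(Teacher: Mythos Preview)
The paper does not prove this theorem. It is stated in the Preliminaries section and immediately attributed to \cite[Corollaries 2 and 3]{MP17} and \cite[Proposition 3.1, Theorem 3.5 and Example 4.13]{MP18}; no argument is given here. So there is no ``paper's own proof'' to compare your proposal against --- you are reconstructing results from earlier work that the present paper simply quotes.

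As to the content of your sketch: the overall architecture (prove (ii), then deduce (i) and (iii)) is sound and matches the way these results are organized in \cite{MP17,MP18}. The iteration estimate giving $\sigma_{ap}(A)\subset[d(A),r(A)]$ is correct, and so is the closedness of $\sigma_{ap}(A)$. However, your proposal is explicitly a plan rather than a proof: at each of the three hard points you name the obstacle and then stop. For (ii) in the case $t=r_x(A)>0$ you correctly observe that the naive geometric series $\bigoplus_k u_k/(t+\varepsilon)^k$ fails (your shift example is exactly the right one), and you say one must ``tune the weight sequence and the length of the iteration window'' --- but you do not carry this out. The actual construction in \cite{MP17} does precisely this: one takes a long finite window $\bigoplus_{k=0}^{N} c_k\, A_\otimes^k\otimes x$ with a carefully chosen increasing-then-constant sequence $(c_k)$ so that the boundary defect at $k=0$ is beaten by the plateau, and this is where the real work lies. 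Similarly, for $[m(A),r(A)]\subset\sigma_{ap}(A)$ you propose to realize every $t$ in the interval as some $r_x(A)$ by superposing basis vectors with geometrically decaying weights; this is indeed the idea in \cite{MP17}, but the ``subtle point'' you flag (controlling the contributions from coordinates with small local radius, uniformly in $k$) is again the entire difficulty, and you do not resolve it. The argument for $d(A)\in\sigma_{ap}(A)$ has the same status.

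In short: nothing you wrote is wrong, and the strategy is the right one, but what you have is an annotated outline of the proofs in \cite{MP17,MP18}, not a proof. The present paper offers nothing to compare it to.
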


\begin{remark} {\rm 
(i) It is known that in general $m(A) \neq r(A)$ and  $ \si_{ap}(A)$ may not be convex (see \cite[Example 7]{MP17} and \cite[Example 3.2]{MP18}).

(ii) For an $n \times n$ nonnegative matrix $A$ it is known that 
$$\si_ {ap}(A)=\si_ {p}(A) =\{t: \hbox{ there exists } j\in\{1,\dots,n\}, t=r_{e_j}(A)\}$$
and also that the above does not hold for  $A \in \RR_+^{\infty\times \infty}$ (\cite[Remark 3]{MP17}).
}
\end{remark}

\bigskip

Denote further
\be
\label{mu}
\mu(A)=\sup\Bigl\{
\Bigl(A(i_1,i_k,\dots,i_2,i_1)^{1/k}: k\in\NN, i_1,\dots,i_k\in \NN\Bigr\}.
\ee
Clearly $\mu(A)\le r(A)$. 
Furthermore, one can assume that the vertices $i_1,\dots,i_k$ in the definition of $\mu(A)$ are mutually distinct.

Recall that for finite matrices $A\in \RR_+^{n\times n}$ we have $r(A)=\mu(A)$. Moreover, in this case
$$
\mu(A)=\max\Bigl\{
\Bigl(A(i_1,i_k,\dots,i_2,i_1)^{1/k}: k\le n, 1\le i_1,\dots,i_k\le n \hbox{ are mutually distinct}\Bigr\}.
$$

For infinite matrices the equality $\mu(A)= r(A)$ is no longer true in general.

\begin{example}{\rm
Let $A\in \RR_+^{\infty\times\infty}$ be defined by $a_{i, i+1}=1$ for all $i\in\NN$ and $a_{ij}=0$ otherwise (backward shift). It is easy to see that $\mu(A)=m(A)=0$ and $r(A)=1$. } 
\end{example}


The following example shows that the supremum in the definition of $\mu (A)$ may not be attained.

\begin{example} {\rm
Let $A\in \RR_+^{\infty\times\infty}$ be defined by $a_{ii}=\frac{i}{i+1}$ for all $i\in\NN$ and $a_{ij}=0$ otherwise. Then $\mu(A)=r(A)=1$ but the supremum in (\ref{mu}) is not attained.
}
\end{example}

\section{Spectral radii formulas  for infinite matrices in max algebra}

For $k\in\NN$ and $A\in \RR_+^{\infty\times\infty}$ write
$$
c_k(A)=\sup\Bigl\{A(i_k,\dots,i_0): i_0,\dots,i_k\in\NN\hbox{ mutually distinct}\Bigr\}
$$
and $r'  (A)$ denote the {\it upper simple path geometric mean radius}, i.e.,  
\begin{equation}
r'  (A)= \limsup_{k\to\infty}c_k (A)^{1/k}.
\label{pathradius}
\end{equation}

\begin{theorem}
\label{rmax1good}
For $A\in \RR_+^{\infty\times\infty}$ we have
\be
r(A)=\max\{\mu(A), r' (A)\}.
\label{rmax1}
\ee
\end{theorem}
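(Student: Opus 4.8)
The inequality $r(A) \ge \max\{\mu(A), r'(A)\}$ is the easy direction. We already know $\mu(A) \le r(A)$. For $r'(A) \le r(A)$, note that $c_k(A) \le \|A^k_\otimes\|$ since the supremum defining $c_k(A)$ ranges over a subset of the tuples appearing in the formula for $\|A^k_\otimes\|$; hence $c_k(A)^{1/k} \le \|A^k_\otimes\|^{1/k}$ and taking $\limsup$ gives $r'(A) \le r(A)$.

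The substance is the reverse inequality $r(A) \le \max\{\mu(A), r'(A)\}$. The plan is to analyze an arbitrary long walk $i_0, i_1, \dots, i_k$ achieving a value close to $\|A^k_\otimes\|$ and decompose it into its simple-path part and its cycle part. Concretely, fix $\e > 0$ and pick, for each large $k$, indices $i_0, \dots, i_k$ with $A(i_k,\dots,i_0) \ge (r(A)-\e)^k$. Walking along this sequence, every time an index repeats we can excise the closed loop between the two occurrences; each excised loop is a cycle, so its contribution to the product is at most $\mu(A)^{\ell}$ where $\ell$ is its length (using $\|A\| \ge \mu(A)$ to bound the omitted factors, or more carefully $A(\text{cycle of length }\ell) \le \mu(A)^\ell$ directly from the definition of $\mu$). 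Iterating this excision until no repetitions remain leaves a simple path on some $p+1 \le k+1$ distinct vertices, whose contribution is at most $c_p(A)$, while the total length of all excised cycles is $k - p$ and their combined contribution is at most $\mu(A)^{k-p}$. Therefore
$$
(r(A)-\e)^k \le A(i_k,\dots,i_0) \le c_p(A)\cdot \mu(A)^{k-p} \le \max\{\|A\|, 1\}^{?}\dots
$$
— here one must be a little careful: bound $c_p(A) \le M^p$ with $M = \max\{\|A\|,1\}$ is too crude, so instead one argues by cases. Either $p \ge \delta k$ for a fixed $\delta$ infinitely often, or $p < \delta k$ for all large $k$.

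In the first case, $(r(A)-\e)^k \le c_p(A)\,\mu(A)^{k-p}$; if $\mu(A) \le r(A)-\e$ this forces $c_p(A) \ge (r(A)-\e)^p$, and taking $p \to \infty$ along the relevant subsequence yields $r'(A) \ge r(A)-\e$. In the second case, $k - p > (1-\delta)k$, so the cycle contribution $\mu(A)^{k-p}$ dominates: if $\mu(A) < r(A)-\e$ we would get $(r(A)-\e)^k \le M^{\delta k}\,\mu(A)^{(1-\delta)k}$, and choosing $\delta$ small enough (depending on $\e$ and the ratio $\log M / \log(r(A)-\e)$, assuming $r(A) > 0$; the case $r(A)=0$ is trivial) makes the right side smaller than $(r(A)-\e)^k$ for large $k$, a contradiction — so in fact $\mu(A) \ge r(A) - \e$. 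Combining the two cases, $\max\{\mu(A), r'(A)\} \ge r(A) - \e$ for every $\e > 0$, which gives the claim.

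The main obstacle is making the excision argument quantitatively clean: one needs a uniform way to say that after removing all cycles, the leftover simple path has length comparable to $k$ unless the cycles already account for almost all the length, and one must handle the bookkeeping so that the product of the excised pieces is controlled by $\mu(A)$ raised to exactly the total excised length (not by $\|A\|$ to that power). A convenient device is to process the walk greedily from one end, maintaining a stack of "active" vertices and popping a cycle the moment a repeat is detected; this guarantees each popped loop is a genuine cycle through distinct vertices, so the bound $A(\text{loop}) \le \mu(A)^{\text{len}}$ applies verbatim, and the residual stack is a simple path. The remaining work — choosing $\delta$ and passing to subsequences — is routine.
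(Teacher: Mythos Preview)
Your argument is correct and uses the same cycle-excision idea as the paper: decompose a near-optimal walk into a simple path plus cycles, bounding each cycle's contribution by $\mu(A)^{\text{length}}$. The paper's execution is cleaner: after disposing of the case $\mu(A)=0$ (where trivially $c_k(A)=\|A_\otimes^k\|$), it normalizes to $\mu(A)=1$, so that excising cycles can only \emph{increase} the product; this forces the residual simple path to have length at least $n_0$ (since its weight exceeds $\|A\|^{n_0}$) and yields $c_k(A)\ge (r(A)-\e)^k$ immediately, bypassing your $\delta$-dichotomy and the attendant case bookkeeping (including the slip where $\mu(A)^{k-p}\le\mu(A)^{(1-\delta)k}$ tacitly assumes $\mu(A)\le 1$).
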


\begin{proof}
Clearly $r(A)\ge\max\{\mu(A), r'(A)\}$. 

Suppose  that $r(A)>\mu (A)$.  
If $\mu(A)=0$ then $c_k(A)=\|A _{\otimes} ^k\|$ for all $k\in\NN$ and so the statement is trivial.

Suppose that $\mu(A)> 0$. Without loss of generality we may assume that $\mu(A)=1$ and
$r(A)>\mu(A)=1$.

Let $n_0\in\NN$ and $0<\e< r(A)-1$. Then there exists $n\ge n_0$ and $i_0,\dots,i_n\in\NN$
such that $A(i_n,\dots,i_1,i_0) >(r(A)-\e)^n>\|A\|^{n_0}$.
Omit all cycles in the path $i_0,i_1,\dots,i_n$. We obtain mutually distinct $j_0,\dots,j_k$ such that $A(j_k,\dots,j_1,j_0)\ge A(i_n,\dots,i_1,i_0)>(r(A)-\e)^n>\|A\|^{n_0}$.
Hence $n \ge k\ge n_0$ and 
$$
c_k(A)\ge (r(A)-\e)^n\ge (r(A)-\e)^k.
$$
Hence $\limsup_{k\to\infty} c_k(A)^{1/k}\ge r(A)-\e$. Since  $\e>0$ was arbitrary, we have $r'(A)=\limsup_{k\to\infty} c_k(A)^{1/k}\ge r(A)$. So $r(A)=\max\{\mu(A), r' (A)\}$.
\end{proof}

\vspace{5mm}
Let $A\in\RR_+^{\infty\times\infty}$, $A=(a_{ij})_{i,j=1}^\infty$. 
Let
$m_e(A)=\limsup_{j\to\infty} r_{e_j} (A)$.

For $n\in\NN$ let $P_n:\ell^\infty_+\to\ell_+^\infty$ be the canonical projection defined by
$P_n(x_1,x_2,\dots)=(\underbrace{0,\dots,0}_n,x_{n+1},\dots)$.

Let $r_{ess} (A)=\lim_{n\to\infty}r(P_nAP_n)= \inf_{n\in \NN }r(P_nAP_n).$ Observe that in this particular case the classical linear algebra product $P_n A P_n$  coincides with the max algebra product $P_n \otimes A \otimes P_n$.

We have
$$
r(A)=\lim_{k\to\infty}\sup\Bigl\{A(i_k,\dots,i_0)^{1/k}: i_0,\dots,i_k\in\NN\Bigr\},
$$
$$
r_{e_j}(A)=\limsup_{k\to\infty}\sup\Bigl\{A(i_k,\dots,i_1,j)^{1/k}: i_1,\dots,i_k\in\NN\Bigr\},
$$
$$
r_{ess}(A)=\lim_{n\to\infty}\lim_{k\to\infty}\sup\Bigl\{\{A(i_k,\dots,i_0)^{1/k}: i_0,\dots,i_k\ge n+1\Bigr\}.
$$

Clearly 
$$
m_e(A)\le m(A)\le r(A)
$$
and
$$
r_{ess}(A)\le r(A).
$$

Next we show that in general $m_e(A)\le r_{ess}(A)$ is not true.

\begin{example} {\rm \label{irexample}
Let

$$
A= \left[ \begin{array}{ccccc}
0&\frac{1}{2}&\frac{2}{3}&\frac{3}{4}& \cdots \\
\frac{1}{2}&0&0&0& \cdots \\
\frac{2}{3}&0&0&0& \cdots \\
\frac{3}{4}&0&0&0& \cdots \\
\vdots&\vdots&\vdots&\vdots &  \ddots
\end{array} \right].$$

Clearly $r_{ess}(A)=0$ since $P_1AP_1=0$. However, $r_{e_j}(A)=1$ for all $j \ge 2$. 
Indeed, $r(A) \le \|A\|=1$ and for $j\ge 2$ we have
$$
A(1,\underbrace{n,1,n\dots,n,1}_k,j)=\Bigl(\frac{n-1}{n}\Bigr)^{2k}\cdot\frac{j-1}{j}
$$
for all $k,n\in\NN$. So
$$
r_{e_j}(A)\ge\limsup_{k\to\infty}\|A_\otimes^{2k+1}e_j\|^{1/(2k+1)}\ge \frac{n-1}{n}.
$$
Since $n\in\NN$ was arbitrary, $r_{e_j}(A)=1$.
So $m_e(A)=m(A)=\mu (A)= r(A)=1$, while $r_{ess}(A)=0$. }
\end{example}

The following example shows that it may happen that $\mu (A) > m_e (A)$.

\begin{example}{\rm
Let $A=(a_{ij})_{i,j=1}^\infty$, where $a_{ii} =\frac{1}{i}$ for all $i \in \NN$ and $a_{ij} =0$ otherwise. Then $r_{e_j} (A) = \frac{1}{j}$ for all $j \in \NN$ and so $m_e (A)=0$. Also $r'(A)= r_{ess}(A) = 0$, but $\mu (A) = m(A) = r(A)=1$.}
\end{example}

\begin{theorem}
\label{rmax2good}
Let $A\in \RR_+^{\infty\times\infty}$. Then $\mu(A)\le m(A)$ and $r'(A)\le r_{ess}(A)$.
Consequently, 
\begin{equation}
r(A)=\max\{r_{ess}(A),m(A)\}.
\label{rmax2}
\end{equation}
\end{theorem}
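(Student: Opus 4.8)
The plan is to establish the two inequalities $\mu(A)\le m(A)$ and $r'(A)\le r_{ess}(A)$ separately; formula (\ref{rmax2}) is then immediate, since Theorem~\ref{rmax1good} gives $r(A)=\max\{\mu(A),r'(A)\}\le\max\{m(A),r_{ess}(A)\}$, while $\max\{m(A),r_{ess}(A)\}\le r(A)$ was already observed. For $\mu(A)\le m(A)$ we may assume $\mu(A)>0$. Fix $c<\mu(A)$ and choose $k\in\NN$ and mutually distinct $i_1,\dots,i_k$ with $A(i_1,i_k,\dots,i_2,i_1)>c^k$, and set $v=i_1$. Traversing this cycle $m$ times gives a closed walk of length $mk$ at $v$, so that $(A^{mk}_\otimes\otimes e_v)_v\ge\bigl(A(i_1,i_k,\dots,i_2,i_1)\bigr)^m>c^{mk}$, whence $\|A^{mk}_\otimes\otimes e_v\|^{1/(mk)}>c$ for every $m$ and therefore $r_{e_v}(A)\ge c$. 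Thus $m(A)\ge r_{e_v}(A)\ge c$, and letting $c\uparrow\mu(A)$ gives $m(A)\ge\mu(A)$.

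For $r'(A)\le r_{ess}(A)$ we may assume $r'(A)>0$ (which forces $\|A\|>0$). Fix $n\in\NN$ and reals $0<c''<c<r'(A)$; it suffices to prove $r(P_nAP_n)\ge c''$, since then $r_{ess}(A)=\inf_n r(P_nAP_n)\ge c''$ and we may let $c''\uparrow r'(A)$. Because $r'(A)=\limsup_k c_k(A)^{1/k}>c$, for arbitrarily large $k$ there is a simple path $i_0,i_1,\dots,i_k$ with $A(i_k,\dots,i_0)>c^k$. Since the path is simple it meets $\{1,\dots,n\}$ in at most $n$ vertices; deleting these vertices splits it into at most $n+1$ sub-paths, each having all its vertices in $\{n+1,n+2,\dots\}$, whose lengths sum to at least $k-2n$ and the product of whose $A$-products is at least $c^k/\|A\|^{2n}$ (at most $2n$ edges are deleted, each of weight $\le\|A\|$).

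Now fix such a large $k$ and discard those sub-paths of length $<\sqrt k$: there are at most $n+1$ of them, so the product of their $A$-products is subexponential in $k$. Hence the surviving sub-paths, each of length $\ge\sqrt k$, have the product of their $A$-products at least $c^k$ divided by a factor subexponential in $k$, while their total length is $k$ up to a correction of order $\sqrt k$. If every survivor had geometric mean $\le c''$, the product of their $A$-products would be at most $(c'')^{(\text{total length})}$, and comparing the two bounds forces $(c/c'')^{k}\le(\text{subexponential in }k)$, which is impossible once $k$ is large. Therefore some surviving sub-path $Q$ of length $\ell\ge\sqrt k$ has all its vertices in $\{n+1,\dots\}$ and $A$-product $>(c'')^\ell$, so $c_\ell(P_nAP_n)^{1/\ell}>c''$. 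Since $k$ may be taken arbitrarily large, this holds for arbitrarily large $\ell$, so $r'(P_nAP_n)\ge c''$ and hence $r(P_nAP_n)\ge r'(P_nAP_n)\ge c''$ by Theorem~\ref{rmax1good}.

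The routine bookkeeping — the case analysis depending on whether $\|A\|\le 1$ or $\|A\|>1$ in the exponential estimates, and precisely which edges are deleted — I will suppress. The one genuinely delicate point is the last paragraph: extracting merely \emph{some} tail sub-path with good geometric mean is useless, since a short path carries no information about $r(P_nAP_n)$; one must produce tail sub-paths of \emph{unbounded} length, which is exactly why the short sub-paths must be shown to absorb only a subexponential factor, so that the geometric-mean estimate survives on the long ones.
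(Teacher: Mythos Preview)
Your argument is correct. The proof of $\mu(A)\le m(A)$ is identical to the paper's. For $r'(A)\le r_{ess}(A)$, both you and the paper take a long simple path of length $N$ with large $A$-product and split it by removing the at most $n$ vertices lying in $\{1,\dots,n\}$, bounding the deleted edges by $\|A\|^{2n}$; the difference is in how the resulting tail sub-paths are handled. The paper argues by contradiction: assuming $r(P_nAP_n)<r'(A)$, it normalizes to $\|A\|=1$ and bounds each tail sub-path of length $m$ from \emph{above} by $\|(P_nAP_n)_\otimes^m\|\le(r(P_nAP_n)+\e)^m$ for $m\ge n_0$ and by $1$ for $m<n_0$, multiplies, and finds the total path product too small. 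You instead work directly: after discarding sub-paths of length $<\sqrt{k}$ (whose contribution is subexponential), an averaging argument produces a single long tail sub-path with geometric mean $>c''$, and you then invoke $r(P_nAP_n)\ge r'(P_nAP_n)$ from Theorem~\ref{rmax1good}. The paper's route is slightly cleaner---the normalization $\|A\|=1$ eliminates your case analysis and no $\sqrt{k}$ threshold is needed---while your route has the mild advantage of never unpacking the limit defining $r(P_nAP_n)$, relying only on the combinatorial quantity $r'$ and the already-proved Theorem~\ref{rmax1good}.
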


\begin{proof}
Let $i_1,\dots,i_k\in\NN$. We have
$$
r_{e_{i_1}}(A)\ge
\limsup_{n\to\infty}\|A^{nk}e_{i_1}\|^{1/nk}\ge $$
$$
\limsup_{n\to\infty}\Bigl(A\bigl(i_1, \underbrace{i_k,\dots,i_1, \cdots ,i_k,\dots,i_1}_n\bigr)
\Bigr)^{1/nk}=
A(i_1,i_k,\dots,i_2,i_1)^{1/k}.
$$
Hence $m(A) \ge \mu (A)$.

\smallskip

To show that $r'(A) \le r_{ess} (A)$  we assume on the contrary that $r_{ess}(A)<r'(A)$. Without loss of generality we may assume that $\|A\|=1$. So there exists $k\in\NN$  such that $r(P_kAP_k)<r'(A)$.
Choose $0<\e<\frac{r'(A)-r(P_kAP_k)}{2}$.
Find $n_0\in\NN$ such that $\|(P_kAP_k)^n\|\le (r(P_kAP_k)+\e)^n$ for all $n\ge n_0$.
 
Find $i_0,i_1,\dots,i_N\in\NN$ mutually distinct  for a suitable sufficiently large $N\in\NN$ such that
$$
A(i_N,\dots,i_0)\ge (r'(A)-\e)^N
$$
(such $i_0,i_1,\dots,i_N$ and $N$ exist by (\ref{pathradius})).
Let 
$$
S=\{j: 0\le j\le N, i_j\le k\}.
$$
Clearly $\card S\le k$. We have
$$
A(i_N,\dots,i_0)=B\cdot C,
$$
where
$$
B=\prod\{a_{i_{j+1},i_j}:0\le j\le N, \{i_j,i_{j+1}\}\cap S\ne\emptyset\}\le\|A\|^{2k}=1
$$
and
$$
C=\prod\{a_{i_{j+1},i_j}:0\le j\le N, \{i_j,i_{j+1}\}\cap S=\emptyset\}.
$$
Then $C$ decomposes into at most $\card S+1\le k+1$ disjoint paths whose elements lie outside $\{1,\dots,k\}$.

If $j_0,j_1,\dots,j_m$ are mutually distinct elements outside $\{1,\dots,k\}$ then
$$
A(j_m,\dots,j_0)\le\|A\|^m=1\qquad(\hbox{if } m<n_0) \qquad\hbox{and }
$$
$$
A(j_m,\dots,j_0) \le (r(P_kAP_k)+\e)^m\quad(\hbox{if }m\ge n_0).
$$
Thus
$$
C\le (r(P_kAP_k)+\e)^{N-(k+1)n_0-2k}.
$$
Hence
$$
r'(A)-\e\le  (BC)^{1/N} \le
(r(P_kAP_k)+\e)^{1-N^{-1}(k+1)n_0-2N^{-1}k}\to r(P_kAP_k)+\e
$$
as $N\to\infty$.
Since $\e>0$ was arbitrary, we have $r'(A)\le r(P_kAP_k)$, a contradiction. 

{\rm  So $\max\{r_{ess}(A), m(A)\}\ge\max\{r'(A),\mu(A)\}=r(A)$ by Theorem \ref{rmax1good}. The reverse inequality is clear.}
\end{proof}

\begin{remark} {\rm \label{maxs}
 By Theorems \ref{rmax1good} and \ref{rmax2good} it follows that for $A\in \RR_+^{\infty\times\infty}$ we also have
\begin{equation}
r(A)=\max\{r' (A),m(A)\} = \max\{r_{ess} (A), \mu (A)\} .
\end{equation}
 }
\end{remark}

\bigskip
Suppose that $r(A)\ne 0$. For $j\in\NN$ write 
$$
c(e_j)=\sup\Bigl\{\frac{A(j,i_{k-1},\dots,i_1,j)}{r(A)^k}:k\in\NN,i_1,\dots,i_{k-1}\in\NN\Bigr\}
$$
(with no exponent $1/k$ here). 
\bigskip

\begin{lemma}
Let $A\in \RR_+^{\infty\times \infty}$, $r(A)\ne 0$, $r(P_1AP_1)<1$ and $c(e_1)<1$. Then $r(A)<1$.
\label{important}
\end{lemma}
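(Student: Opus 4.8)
The plan is to show that every candidate contributing to $r(A)$ — via the decomposition $r(A)=\max\{r'(A),\mu(A)\}$ from Theorem~\ref{rmax1good} — is strictly below $1$. Recall the normalization is implicit: we want to derive $r(A)<1$ from $r(P_1AP_1)<1$ and $c(e_1)<1$ (note $r(P_1AP_1)<1$ already forces $\|A\|$ to be controlled only on the tail; one should first check whether $\|A\|\le 1$ may be assumed, or argue directly). By Theorem~\ref{rmax2good} and Remark~\ref{maxs}, it suffices to bound $r'(A)$ and $m(A)$ separately. Since $r'(A)\le r_{ess}(A)=\lim_n r(P_nAP_n)\le r(P_1AP_1)<1$, the simple-path part is immediate. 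So the crux is to show $m(A)<1$, i.e. $r_{e_j}(A)<1$ for every $j$, with a uniform bound.

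\smallskip

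\emph{Bounding $r_{e_j}(A)$.} Fix $j$ and consider a path $A(i_k,\dots,i_1,j)$ of length $k$ starting at $j$. The idea is to split the path at its returns to the vertex $1$ (if $1$ is visited) and, within the tail $N(A)\setminus\{1\}$, at its returns to $j$. First handle $j=1$: a path from $1$ of length $k$ decomposes as a product of "loops at $1$", each of the form $A(1,i_{m-1},\dots,i_1,1)$ with interior vertices possibly including further visits to $1$ — more carefully, cut at each occurrence of the index $1$, obtaining a product of factors $A(1,\dots,1)$ with no interior $1$, each bounded by $c(e_1)\cdot r(A)^{(\text{its length})}$... but this is circular since $c(e_1)$ is defined with $r(A)$ in the denominator. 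The honest approach: each maximal sub-path between consecutive visits to $1$ lies in $\{2,3,\dots\}$ except for its endpoints; bound such a sub-path of length $m$ by $\|A\|^2\cdot\|(P_1AP_1)^{m-2}\|\le\|A\|^2(r(P_1AP_1)+\e)^{m-2}$ for $m$ large, and the number of such sub-paths times their total length sums to $k$. Then $A(1,\dots,1)^{1/k}\to$ something $\le\max\{r(P_1AP_1),\ \text{(loop contribution at }1)\}$. The loop contribution at $1$ is exactly where $c(e_1)<1$ enters: a loop $A(1,\dots,1)$ of length $m$ satisfies $A(1,\dots,1)\le c(e_1)\, r(A)^m$ by definition, but since we are trying to prove $r(A)<1$ we instead use $c(e_1)<1$ together with $r(A)\le\|A\|$... this still needs care. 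The cleanest route is a contradiction argument mirroring the proof of Theorem~\ref{rmax2good}: assume $r(A)\ge 1$; produce a long path of geometric mean close to $r(A)\ge 1$; decompose it at visits to $1$ into tail-pieces (each $\le(r(P_1AP_1)+\e)^{\text{length}}$, strictly contractive) and $1$-loops (each $\le c(e_1)\,r(A)^{\text{length}}$); multiply out; the tail-pieces drag the geometric mean strictly below $r(A)$ unless almost the entire length is in $1$-loops, in which case the $c(e_1)<1$ factors accumulate and again force the geometric mean strictly below $1\le r(A)$ — contradiction.

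\smallskip

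\emph{Reducing $r_{e_j}(A)$ for $j\ne 1$ to the case $j=1$.} A path from $j\ne 1$ either never hits $1$, in which case it lies in the tail and is bounded by $\|A\|\cdot(r(P_1AP_1)+\e)^{k-1}$, or it hits $1$ at some point; split off the initial tail-segment from $j$ up to the first visit to $1$ (length $\ell$, contributing $\le\|A\|(r(P_1AP_1)+\e)^{\ell-1}$) and apply the $j=1$ analysis to the remainder. Taking $k\to\infty$, the initial segment's contribution to the $1/k$-th power washes out, so $r_{e_j}(A)\le\max\{r(P_1AP_1),\,r_{e_1}(A)\}<1$, uniformly in $j$; hence $m(A)<1$.

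\smallskip

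\emph{Main obstacle.} The delicate point is the bookkeeping in the decomposition-at-visits-to-$1$: controlling simultaneously the number of $1$-loops, their total length, and the total tail-length, so that in the limit the geometric mean is $\le\max\{r(P_1AP_1),\,(\text{geometric rate of }1\text{-loops})\}$, and then showing the $1$-loop rate is $<1$ — this last step is essentially the content of $c(e_1)<1$, but turning the single inequality $c(e_1)<1$ (a supremum over loops of $A(1,\dots,1)/r(A)^k$, no root) into a rate bound on products of loops requires noticing that $\prod c(e_1)\cdot r(A)^{m_t}=c(e_1)^{(\#\text{loops})}r(A)^{\sum m_t}$ and that either the number of loops is a positive fraction of $k$ (so $c(e_1)^{\Theta(k)}$ wins) or it is $o(k)$ (so the lengths $m_t$ are large, but each long $1$-loop, having a long interior in the tail, is itself $\le\|A\|^2(r(P_1AP_1)+\e)^{m_t-2}$, again contractive). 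Formalizing this dichotomy cleanly is the heart of the argument; the rest is routine limiting.
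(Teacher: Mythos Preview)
Your plan is viable: the reduction $r'(A)\le r_{ess}(A)\le r(P_1AP_1)<1$ is correct, the passage from $r_{e_j}(A)$ to $r_{e_1}(A)$ via the first visit to $1$ is sound, and the loop dichotomy can be made rigorous (fix a small $\alpha>0$; if the number $s$ of $1$-loops is $\ge\alpha k$ use the bound $c(e_1)$ on each loop, giving geometric mean $\lesssim c(e_1)^\alpha$; if $s<\alpha k$ use the tail bound $\|A\|^2C\,b^{m_t-2}$ on every loop and on the final segment, giving geometric mean $\lesssim(\|A\|^2C)^\alpha b^{1-O(\alpha)}<1$ for $\alpha$ small). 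But the paper bypasses this entire decomposition by bounding $\mu(A)$ instead of $m(A)$. Under the contradiction hypothesis $r(A)=1$ one has $r_{ess}(A)\le r(P_1AP_1)<1$, so Remark~\ref{maxs} forces $\mu(A)=1$; now choose a single cycle with \emph{mutually distinct} vertices of geometric mean close to $1$. Distinctness means vertex $1$ occurs at most once, so your product of $1$-loops collapses to a single factor: either the cycle avoids $1$ (geometric mean $\le\mu(P_1AP_1)<b$, done), or it passes through $1$ exactly once and factors as two boundary edges times one tail path of length $k-2$, giving weight $\le\|A\|^2b^{k-2}$. The hypothesis $c(e_1)<b$ then serves only to force $k$ large (any cycle through $1$ has total weight $\le c(e_1)<b$, hence geometric mean $\le b^{1/k}$, which is close to $1$ only for large $k$). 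Your route yields an explicit estimate on $r_{e_1}(A)$; the paper's route makes the ``heart of the argument'' you flag simply disappear.
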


\begin{proof}
Suppose on the contrary that $r(A)\ge 1$. 
Without loss of generality we may assume that $r(A)=1$. Let $b\in (0,1)$ satisfy $r(P_1AP_1)<b$ and $c(e_1)<b$.
Since $r(P_1AP_1)<b$, there exists $m_0\in\NN$ such that
$$
A(i_m,\dots,i_0)\le b^m\qquad(m\ge m_0; \;\; i_0,\dots, i_m\ge 2).
$$
We have $r_{ess}(A)\le r(P_1AP_1)<b$. So $\mu(A)=r(A)=1$ by Remark \ref{maxs}.

Let $k\ge m_0+2$ satisfy $\|A\|^2 b^{k/2-2}<1$ and choose mutually distinct $i_0,i_1,\dots,i_{k-1}\in\NN$
such that $A(i_0,i_{k-1},\dots,i_1,i_0)^{1/k}>b^{1/2}$.

If $1\notin\{i_0,\dots,i_{k-1}\}$ then $r(P_1AP_1)\ge A(i_0,i_{k-1},\dots,i_0)^{1/k}>b^{1/2}\ge b$, a contradiction.

Let $1\in\{i_0,\dots,i_{k-1}\}$. Without loss of generality we may assume that $i_0=1$. Then
$$
b^{k/2}<
A(i_0,i_{k-1},\dots,i_0)\le
\|A\|^2\cdot \|(P_1AP_1)^{k-2}\|\le
\|A\|^2\cdot b^{k-2}.
$$
So $1<b^{k/2-2}\|A\|^2$, a contradiction.


\end{proof}

Under the assumption $r_{ess}(A)<r(A)$ we prove additional results.

\begin{theorem}
Let $A\in \RR_+^{\infty\times\infty}$ and $r_{ess}(A)<r(A)$. Then there exists $i_0\in\NN$ with 
$r_{e_{i_0}}(A)=\mu (A) =r(A)$. In particular, 
$m(A)=r(A)= \mu (A)$.
\label{e_i attained}
\end{theorem}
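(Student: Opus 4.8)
The plan is to combine the identity $r(A)=\max\{r_{ess}(A),\mu(A)\}$ from Remark \ref{maxs} with a localization argument for the supremum defining $\mu(A)$. Since $r_{ess}(A)\ge 0$, the hypothesis forces $r(A)>0$, so after rescaling $A$ by $1/r(A)$ I may assume $r(A)=1$; then Remark \ref{maxs} gives $\mu(A)=1$, while $r_{ess}(A)<1$ provides some $n\in\NN$ with $\rho:=r(P_nAP_n)<1$. The target then becomes: produce a single $i_0\in\NN$ with $r_{e_{i_0}}(A)\ge 1$, since $r_{e_{i_0}}(A)\le r(A)=1$ always holds and $\mu(A)=r(A)=1$ is already known.

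The key point is that $P_nAP_n$ is, up to re-indexing, the restriction of $A$ to the indices $\ge n+1$, so $\mu(P_nAP_n)\le r(P_nAP_n)=\rho$; hence every cyclic path of $A$ through mutually distinct vertices that avoids $\{1,\dots,n\}$ has geometric mean at most $\rho<1$. Now for each $m\in\NN$ with $1-1/m>\rho$ I would use $\mu(A)=1$ to pick mutually distinct $i_1^{(m)},\dots,i_{k_m}^{(m)}\in\NN$ with $A(i_1^{(m)},i_{k_m}^{(m)},\dots,i_2^{(m)},i_1^{(m)})^{1/k_m}>1-1/m$. This cycle must meet $\{1,\dots,n\}$; since cyclically shifting it preserves both the geometric mean and the mutual distinctness of its vertices, I may assume $i_1^{(m)}\le n$. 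As $\{1,\dots,n\}$ is finite, the pigeonhole principle yields a fixed $i_0\in\{1,\dots,n\}$ and an infinite set $M\subseteq\NN$ with $i_1^{(m)}=i_0$ for all $m\in M$.

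Finally I would invoke the cycle-iteration estimate from the proof of Theorem \ref{rmax2good}: for $m\in M$, running the cycle based at $i_0$ around $N$ times gives $\|A_\otimes^{Nk_m}\otimes e_{i_0}\|^{1/(Nk_m)}\ge A(i_0,i_{k_m}^{(m)},\dots,i_2^{(m)},i_0)^{1/k_m}>1-1/m$, so $r_{e_{i_0}}(A)\ge 1-1/m$; letting $m\to\infty$ along $M$ gives $r_{e_{i_0}}(A)\ge 1=r(A)$, and hence $r_{e_{i_0}}(A)=r(A)=\mu(A)$. The concluding assertion is then immediate from $m(A)=\sup_j r_{e_j}(A)\in[r_{e_{i_0}}(A),r(A)]$.

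I expect the localization step to be the crux: one must rule out the near-critical cycles escaping to infinity, and the hypothesis $r_{ess}(A)<r(A)$ is used precisely to confine each of them to the finite set $\{1,\dots,n\}$, so that one recurrent vertex $i_0$ can be singled out by pigeonhole. Lemma \ref{important} could alternatively serve as the engine of an induction-on-$n$ argument, peeling off one coordinate at a time and deriving that $c(e_{i_0})\ge 1$ for some $i_0\le n$ (which in turn forces $r_{e_{i_0}}(A)=1$ by iterating the returning walks); but I would expect the direct cycle argument above to be the shortest.
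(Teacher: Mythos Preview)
Your argument is correct, and it takes a genuinely different route from the paper's. After the common normalization $r(A)=1$ and the choice of $n$ with $r(P_nAP_n)<1$, the paper proceeds exactly along your ``alternative'' suggestion: it applies Lemma \ref{important} inductively, peeling off one coordinate at a time, to conclude that some $i_0\le n$ must satisfy $c(e_{i_0})=1$, and from this deduces both $r_{e_{i_0}}(A)=1$ and $\mu(A)=1$. By contrast, you first invoke Remark \ref{maxs} to obtain $\mu(A)=1$ directly, then use $\mu(P_nAP_n)\le r(P_nAP_n)<1$ to force every near-critical cycle to meet $\{1,\dots,n\}$, cyclically rotate, and apply pigeonhole to extract a recurring base vertex $i_0$; the conclusion $r_{e_{i_0}}(A)=1$ then follows from iterating the cycle, exactly as in the proof of $\mu(A)\le m(A)$ in Theorem \ref{rmax2good}. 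Your approach is more transparent and avoids the technical Lemma \ref{important} entirely. The paper's approach, on the other hand, yields the stronger conclusion $c(e_{i_0})=1$ (control on the raw cycle products, not merely their geometric means), which is precisely what is reused in the proof of the next result, Theorem \ref{eig}, to build an eigenvector; your cycles through $i_0$ have geometric mean close to $1$ but possibly very small product, so they do not immediately furnish that additional information.
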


\begin{proof}  Without loss of generality we may assume that $r(A)=1$.

 Since $r_{ess}(A)<1$, there exists $n\in\NN$ with $r(P_nAP_n)<1$. By Lemma \ref{important}, there exists $i_0\le n$ with $c(e_{i_0})=1$. Indeed, if $c(e_i)<1$ for all $1\le i\le n$, then Lemma \ref{important} gives inductively $r(P_{n-1}AP_{n-1})<1$, $r(P_{n-2}AP_{n-2})<1, \dots, r(P_1AP_1)<1$, $r(A)<1$, a contradiction.

Let $i_0\le n$ satisfy $c(e_{i_0})=1$. So for each $\e \in (0,1)$ there exist $k_{\e}\in\NN$ and $i_1,\dots,i_{k_\e-1}\in\NN$ with $A(i_0,i_{k_\e-1},\dots,i_1,i_0)>1-\e$. So
$$
\min \{r_{e_{i_0}}(A), \mu (A) \}\ge
(1-\e)^{1/k_\e}\ge 1-\e.
$$
Since $\e \in (0,1)$ was arbitrary, $r_{e_{i_0}}(A)=1=\mu (A)$. Hence $m(A)=\mu (A)= r(A)=1$.
\end{proof}

\begin{lemma}
If $A\in \RR_+^{\infty\times\infty}$ such that $r_{ess}(A)<r(A)=1$, then $\sup_n\|A_\otimes^n\|<\infty$.
\label{power_bound}
\end{lemma}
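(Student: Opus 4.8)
The plan is to exploit the structure $r(A)=1$ with $r_{ess}(A)<1$ to show that every long path factorizes into a bounded ``core'' part near the coordinates $\{1,\dots,n\}$ and a part living outside $\{1,\dots,n\}$ whose contribution decays geometrically. Concretely, since $r_{ess}(A)<1$, pick $n$ with $r(P_nAP_n)<1$; then there is $b\in(0,1)$ and $m_0\in\NN$ with $A(i_m,\dots,i_0)\le b^m$ whenever $m\ge m_0$ and $i_0,\dots,i_m\ge n+1$. Normalize $\|A\|$ by noting $\|A\|<\infty$ is given. The goal is a bound $\|A_\otimes^N\|=\sup\{A(i_N,\dots,i_0):i_0,\dots,i_N\in\NN\}\le C$ uniformly in $N$.

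The key step is the decomposition argument already used in the proof of Theorem \ref{rmax2good}. Given any path $i_0,i_1,\dots,i_N$, let $S=\{j:0\le j\le N,\ i_j\le n\}$, the set of ``times'' the path visits $\{1,\dots,n\}$. Factor $A(i_N,\dots,i_0)=B\cdot C$ where $B$ collects the edges incident to a visit in $S$ and $C$ collects the edges both of whose endpoints lie outside $\{1,\dots,n\}$. Unlike in Theorem \ref{rmax2good}, here the vertices need not be distinct, so $\card S$ is not bounded a priori; this is where I expect the main obstacle to lie. To handle it I would split $C$ into the maximal subpaths lying entirely outside $\{1,\dots,n\}$: each such subpath of length $m\ge m_0$ contributes at most $b^m$, and each of length $<m_0$ contributes at most $\|A\|^{m}\le\max\{1,\|A\|\}^{m_0}$. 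The product of the long-subpath contributions is at most $b^{(\hbox{total length of long subpaths})}$. Meanwhile $B$ is a product of at most $2\,(\hbox{number of visits to }\{1,\dots,n\})$ edges.

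The remaining difficulty is that the number of returns to $\{1,\dots,n\}$ could still be large, making $B$ large. To control this I would invoke that $c(e_i)\le$ some constant $<\infty$ for each $i\le n$ — indeed by the proof method of Lemma \ref{important}, since $r(P_1AP_1)<1$ forces, via the hypothesis $r(A)=1$ and Lemma \ref{important}, that $c(e_{i})\ge 1$ is impossible to avoid only at the attained coordinate, but for our purposes the relevant fact is that each ``excursion'' starting and ending in $\{1,\dots,n\}$ has geometric mean at most $1=r(A)$, so a closed excursion based at a fixed $i\le n$ contributes at most $c(e_i)<\infty$ by definition of $c(e_i)$ once we know $c(e_{i})<\infty$; and $c(e_i)<\infty$ follows because an infinite supremum would force $r_{e_i}(A)>1$, contradicting $r(A)=1\ge r_{e_i}(A)$. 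Summing over the at-most-$n$ possible base points and using that consecutive excursions multiply, together with a single entry and exit edge bounded by $\|A\|$, yields $A(i_N,\dots,i_0)\le \|A\|^{2}\cdot\big(\max_{i\le n}c(e_i)\big)^{\,?}$ — and here one must still cap the number of excursions, which is done by observing that beyond $n$ excursions some base point repeats, so two excursions merge into one, reducing the count; iterating, the worst case has at most $n$ ``independent'' excursion factors. Hence $\|A_\otimes^N\|\le \|A\|^2\big(\max_{i\le n}c(e_i)\big)^{n}<\infty$, uniformly in $N$, which is the claim. I expect the bookkeeping of merging repeated excursions to be the delicate point; everything else is the factorization already rehearsed in Theorem \ref{rmax2good}.
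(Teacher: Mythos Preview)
Your decomposition into excursions is the right picture, but there is a genuine gap in the two steps you flag as ``delicate''. First, your justification that $c(e_i)<\infty$ is incorrect: an unbounded sequence of cycle weights $A(i,\dots,i)$ does \emph{not} force $r_{e_i}(A)>1$ (think of cycle weights growing like $k$, whose $k$-th roots tend to $1$). Second, and more seriously, the merging bookkeeping does not give the bound you claim. If the visit sequence to $\{1,\dots,n\}$ is, say, $1,2,1,2,\dots$ of length $M$, each merge extracts one closed-loop factor bounded by $\max_{i\le n} c(e_i)$ and reduces the count by a bounded amount; you end up with roughly $M/2$ such factors, not $n$. So your final estimate is $(\max_i c(e_i))^{\Theta(M)}$ with $M$ unbounded, which is useless unless $\max_i c(e_i)\le 1$.

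That last inequality is exactly the missing idea, and it is immediate once you notice that $r_{ess}(A)<r(A)$ forces $\mu(A)=r(A)=1$ (Remark \ref{maxs} or Theorem \ref{e_i attained}). With $\mu(A)=1$, every cycle in any path has weight $\le 1$, so you may simply delete \emph{all} cycles from $(i_N,\dots,i_0)$ without decreasing the product. This reduces to a path with mutually distinct vertices, whence $\card S\le n$ automatically, and the factorization from Theorem \ref{rmax2good} finishes the job in one line: $A(i_N,\dots,i_0)\le \|A\|^{2n}\cdot C^{\,n+1}$ where $C=\sup\{A(j_m,\dots,j_0): j_1,\dots,j_{m-1}>n\}<\infty$. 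This is precisely the paper's proof; your excursion-merging detour is unnecessary and, as stated, does not close.
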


\begin{proof} 
Since $r_{ess}(A)<1$, there exist $n_0\in\NN$ and $m_0\in\NN$ such that
$$
A(i_m,\dots,i_0)<1\qquad(m\ge m_0; \;\;  i_0,\dots,i_m>n_0).
$$
In particular, 
$$
C:=\sup\{A(i_m,\dots,i_0): m\in\NN, i_1,\dots,i_{m-1}>n_0\}<\infty.
$$

We have 
$$
\sup_{k\in\NN}\|A_\otimes^k\|=\sup\{A(i_k,\dots,i_0): k\in\NN, i_0,\dots,i_k\in\NN\}.
$$
Since $\mu(A)=r(A)=1$, we can omit in the path $(i_k,\dots,i_0)$ all cycles and assume without loss of generality that the indices $i_0,\dots,i_k$ are mutually distinct.
Let $S=\{j: i_j\le n_0\}$. Clearly $\card S \le n_0$. So $S$ divides the path $i_0,\dots,i_k$ into at most $n_0+1$ subpaths with vertices outside the set $\{1,\dots, n_0\}$. 
So
$$
A(i_k,\dots,i_0)\le \|A\|^{2n_0} C^{n_0+1}
$$
and consequently $\sup_{k\in\NN}\|A_\otimes^k\|<\infty.$

\end{proof}

\begin{theorem}
Let $A\in \RR_+^{\infty\times\infty}$ and $r_{ess}(A)<r(A)$. Then $r(A) \in\sigma_p (A)$.
\label{eig}
\end{theorem}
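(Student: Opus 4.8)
The plan is to produce an explicit max eigenvector for the value $r(A)$ via the Kleene-star (weak transitive closure) construction applied to a single, well-chosen standard basis vector, after which the eigenvector equation reduces to a coordinatewise check.

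First I would normalize so that $r(A)=1$; then $r_{ess}(A)<1$. Repeating the argument in the proof of Theorem \ref{e_i attained}, which invokes Lemma \ref{important}, I obtain an index $i_0\in\NN$ with $c(e_{i_0})=1$; here $c(e_{i_0})\le 1$ holds automatically, since every closed walk based at $i_0$ has geometric mean at most $\mu(A)=1$, hence weight at most $1$. By Lemma \ref{power_bound}, $M:=\sup_{k\ge 0}\|A^k_\otimes\|<\infty$. I then set
$$x=\bigoplus_{k=0}^{\infty}\bigl(A^k_\otimes\otimes e_{i_0}\bigr),$$
i.e.\ $x_i=\sup_{k\ge 0}\bigl(A^k_\otimes\otimes e_{i_0}\bigr)_i$ for $i\in\NN$, with the convention $A^0_\otimes\otimes e_{i_0}=e_{i_0}$. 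Then $0\le x_i\le M$ for every $i$ and $x_{i_0}\ge (e_{i_0})_{i_0}=1$, so $x\in\ell^\infty_+$ and $x\neq 0$.

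It remains to verify $A\otimes x=x$, which is the required identity $A\otimes x=r(A)x$ after normalization. Interchanging the (nonnegative) suprema and using associativity of $\otimes$ gives
$$(A\otimes x)_i=\sup_{j\in\NN}a_{ij}x_j=\sup_{k\ge 0}\bigl(A^{k+1}_\otimes\otimes e_{i_0}\bigr)_i=\sup_{m\ge 1}\bigl(A^{m}_\otimes\otimes e_{i_0}\bigr)_i\qquad(i\in\NN),$$
whereas $x_i=\max\{(e_{i_0})_i,\ \sup_{m\ge 1}(A^m_\otimes\otimes e_{i_0})_i\}$. For $i\neq i_0$ the first term is $0$, so $(A\otimes x)_i=x_i$. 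For $i=i_0$ the quantity $\sup_{m\ge 1}(A^m_\otimes\otimes e_{i_0})_{i_0}$ is precisely $c(e_{i_0})$ written out (using $r(A)=1$), hence it equals $1$, and so $x_{i_0}=\max\{1,1\}=1=(A\otimes x)_{i_0}$. Therefore $A\otimes x=x=r(A)x$ and $r(A)\in\sigma_p(A)$.

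The entire substance lies in the two facts already proved: boundedness of the Kleene-star vector, supplied by Lemma \ref{power_bound}, and --- the crucial point --- the possibility of choosing the base vertex $i_0$ with the \emph{equality} $c(e_{i_0})=r(A)$ rather than merely $r_{e_{i_0}}(A)=r(A)$, supplied by Lemma \ref{important}. It is exactly this equality that upgrades the a priori inequality $A\otimes x\le r(A)x$ to equality in the $i_0$-th coordinate; were $c(e_{i_0})<r(A)$, this $x$ would fail to be an eigenvector, so there is no obvious way to bypass Lemma \ref{important}.
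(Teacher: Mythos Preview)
Your proof is correct and follows essentially the same route as the paper: normalize $r(A)=1$, use Lemma~\ref{important} (as in the proof of Theorem~\ref{e_i attained}) to obtain an index $i_0$ with $c(e_{i_0})=1$, invoke Lemma~\ref{power_bound} for boundedness, and take $x=\bigoplus_{k\ge 0}A^k_\otimes\otimes e_{i_0}$ as the eigenvector. The only difference is cosmetic---you verify $A\otimes x=x$ coordinatewise via the identity $\sup_{m\ge 1}(A^m_\otimes\otimes e_{i_0})_{i_0}=c(e_{i_0})$, whereas the paper argues via $x=(A\otimes x)\oplus e_{i_0}$ together with $(A\otimes x)_{i_0}\ge 1-\e$ for every $\e>0$.
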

\begin{proof}
Without loss of generality we may assume that $r(A)=1$. By Lemma \ref{important}, there exist $n\in \NN$ and $i_0\le n$ with $c(e_{i_0})=1$.
Set $x=\bigoplus_{j=0}^\infty A^j \otimes e_{i_0}$. By Lemma \ref{power_bound}, $x\in\ell^\infty_+$. We have
$$
A\otimes x=\bigoplus_{j=1}^\infty A^j \otimes e_{i_0}\le x.
$$
On the other hand, $x=(A\otimes x)\oplus e_{i_0}$. Since $c(e_{i_0})=1$, for each $\e>0$ there exist $k_\e\in\NN$ and $i_1,\dots,i_{k_\e-1}\in\NN$ with $A(i_0,i_{k_\e-1},\dots,i_1,i_0)>1-\e$. Hence
 $A\otimes x\ge A^{k_\e} \otimes e_{i_0}\ge (1-\e)e_{i_0}$. Since $\e>0$ was arbitrary, $A \otimes x\ge e_{i_0}$ and $A\otimes x=x$. Hence $r(A)\in\sigma_p(A)$.
\end{proof}
\begin{remark}{\rm There are several closely related results to Theorem \ref{eig} in the literature (\cite[Theorem 3.4]{MN02}, 
\cite[Theorem 4.4]{MN10} and \cite[Theorem 3.14]{MP17}; see also \cite[Conjecture 4.1]{MN10}). At the moment it is not clear if Theorem \ref{eig} is a special case of some of these results (in particular, it is not clear what is the relation between $r_{ess} (A)$ and the essential spectral radii studied there). In any case, our  proof of  Theorem \ref{eig} is more elementary than the proofs of
(\cite[Theorem 3.4]{MN02}, \cite[Theorem 4.4]{MN10} and \cite[Theorem 3.14]{MP17}).  
}
\end{remark}

The assumption  $r_{ess}(A)<r(A)$ is necessary for the conclusion of Theorem \ref{eig} as the following example shows.


\begin{example}{\rm
\label{forward}
 Let  $a_{i, i-1}=1$ for all $i\in \NN$, $i\ge 2$ and $a_{i,j}=0$ otherwise ($A$ is a forward shift). Then $r(A)=r_{ess} (A)=r'(A)=m(A)=m_e(A)=1$, $\mu (A)=0$ and 
$1$ is not in $\sigma_p (A)=\emptyset $.}
\end{example}

\bigskip

We conclude this section with some additional results on irreducible matrices.
The weighted directed graph $\mathcal{D}(A)$ associated with $A \in  \RR_+ ^{\infty\times\infty}$ has the vertex set $\NN$ and edges $(i,j)$ 
from a vertex $i$ to a vertex $j$ with weight $a_{ij}$ if and only if $a_{ij}>0$. A matrix $A \in  \RR_+ ^{\infty\times\infty}$ is called irreducible if and only if   $\mathcal{D}(A)$ strongly connected (for each $i,j \in \NN$, $i\neq j$, there exists  a path from $i$ to $j$ in 
$\mathcal{D}(A)$).  Equivalently,  $A \in  \RR_+ ^{\infty\times\infty}$ is irreducible if and only if for each $(i, j) \in \NN \times \NN$ there exists $k=k(i,j)$ such that $(A^k _{\otimes} )_{ij} >0$.
A matrix $A \in  \RR_+ ^{\infty\times\infty}$ is called reducible if it is not irreducible. Equivalently, 
$A \in  \RR_+ ^{\infty\times\infty}$ is reducible if and only if  there exists a non-empty set $M\subset \NN$, $M\neq \NN$, such that
$a_{ij}=0$ for all $(i,j) \in M \times (\NN\setminus M)$.

Obviously, $\mu (A)>0$ if $A$ is irreducible. We say that $x \in l^{\infty} _+ $ is strictly positive (and we denote $x>0$) if $x_i >0$ for all $i \in \mathbb{N}$. The following result generalizes a well known  finite dimensional result to the infinite dimensional case (see also \cite{AGW04}).

\begin{proposition} Let $A \in  \RR_+ ^{\infty\times\infty}$ be irreducible. If $\lambda \in \sigma _p (A)$ and $A\otimes x = \lambda x$, $x \in l^{\infty} _+$, $x\neq 0$, then 
$x >0$ and $\lambda \in [\mu (A), r(A)]$.
\label{irreducible}
\end{proposition}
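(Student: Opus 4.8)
The plan is to prove the two conclusions separately. First I would show $x>0$. Suppose some coordinate vanishes, and let $M=\{i:x_i=0\}$; by hypothesis $x\neq 0$, so $M\neq\NN$, and we assume (for contradiction) that $M\neq\emptyset$. For $i\in M$ the eigenvalue equation reads $0=\lambda x_i=(A\otimes x)_i=\sup_j a_{ij}x_j$, hence $a_{ij}x_j=0$ for every $j$; in particular $a_{ij}=0$ whenever $j\notin M$, i.e. whenever $j\in\NN\setminus M$. Thus $a_{ij}=0$ for all $(i,j)\in M\times(\NN\setminus M)$, which is exactly the condition that $A$ is reducible — contradicting irreducibility. (One subtlety: this argument uses $\lambda>0$, so I first dispose of the case $\lambda=0$. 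If $\lambda=0$ then $A\otimes x=0$, so for every $i$ we have $a_{ij}x_j=0$ for all $j$; picking any $i_0$ with $x_{i_0}>0$, irreducibility gives, for each $i$, a $k$ with $(A^k_\otimes)_{i i_0}>0$, whence $(A^k_\otimes \otimes x)_i\ge (A^k_\otimes)_{i i_0}x_{i_0}>0$; but $A^k_\otimes\otimes x=0$, a contradiction — so $x$ could not be nonzero, ruling out $\lambda=0$ entirely when $A$ is irreducible. Since $\mu(A)>0$ in the irreducible case, the interval $[\mu(A),r(A)]$ does not contain $0$ anyway, so this is consistent.) Hence $\lambda>0$ and $x>0$.

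Next I would show $\lambda\le r(A)$. Since $\lambda\in\sigma_p(A)\subset\sigma_{ap}(A)$, Theorem (the preliminaries theorem), part (iii), gives $\lambda\le\max\{t:t\in\sigma_{ap}(A)\}=r(A)$. Alternatively, directly: iterating $A\otimes x=\lambda x$ gives $A^m_\otimes\otimes x=\lambda^m x$, so $\|A^m_\otimes\otimes x\|=\lambda^m\|x\|$, whence $r_x(A)=\lambda$; since $x>0$ is bounded, $x$ is comparable to $y=(1,1,\dots)$ (there is $c>0$ with $cy\le x\le\|x\|y$ coordinatewise — here I use $x>0$, though boundedness below is not automatic, so I would instead just use $x\le\|x\|y$ together with $A^m_\otimes\otimes x\le\|x\|\,A^m_\otimes\otimes y$, giving $\lambda^m\|x\|=\|A^m_\otimes\otimes x\|\le\|x\|\,\|A^m_\otimes\otimes y\|$, so $\lambda\le\|A^m_\otimes\otimes y\|^{1/m}\to r(A)$). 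Either way $\lambda\le r(A)$.

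Finally, the lower bound $\lambda\ge\mu(A)$. Here I would use strict positivity of $x$. Fix mutually distinct $i_1,\dots,i_k$; I want $A(i_1,i_k,\dots,i_2,i_1)^{1/k}\le\lambda$. Applying the eigenvalue equation along the cycle: $\lambda x_{i_1}=(A\otimes x)_{i_1}\ge a_{i_1 i_k}x_{i_k}$, $\lambda x_{i_k}\ge a_{i_k i_{k-1}}x_{i_{k-1}}$, \dots, $\lambda x_{i_2}\ge a_{i_2 i_1}x_{i_1}$. Multiplying these $k$ inequalities and cancelling $x_{i_1}x_{i_2}\cdots x_{i_k}>0$ (legitimate precisely because $x>0$) yields $\lambda^k\ge a_{i_1 i_k}a_{i_k i_{k-1}}\cdots a_{i_2 i_1}=A(i_1,i_k,\dots,i_2,i_1)$, hence $\lambda\ge A(i_1,i_k,\dots,i_2,i_1)^{1/k}$. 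Taking the supremum over all $k$ and all cycles gives $\lambda\ge\mu(A)$. Combining, $\lambda\in[\mu(A),r(A)]$.

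The main obstacle, such as it is, lies in the first step: ensuring the bookkeeping with $\sup$'s (rather than finite $\max$'s) is done carefully — in particular that $\sup_j a_{ij}x_j=0$ really forces $a_{ij}x_j=0$ for \emph{every} $j$, which is clear, and in handling the degenerate case $\lambda=0$ cleanly so that the stated interval makes sense. The cycle-multiplication step for $\lambda\ge\mu(A)$ is the heart of the Perron–Frobenius flavour but is entirely routine once $x>0$ is in hand; no infinite-dimensional difficulty arises there because only finitely many coordinates are involved in each cycle.
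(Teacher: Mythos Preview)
Your proof is correct and follows essentially the same approach as the paper's: irreducibility forces $\lambda>0$ and $x>0$, the upper bound $\lambda\le r(A)$ comes from $\lambda=r_x(A)$, and the lower bound $\lambda\ge\mu(A)$ comes from reading the eigenvalue equation along cycles and cancelling the positive $x_{i_j}$'s. The only cosmetic difference is that for $x>0$ you invoke the set-theoretic characterization of reducibility (via the zero set $M$), whereas the paper uses the equivalent path formulation directly, arguing $\lambda^k x_m\ge (A^k_\otimes)_{mi}x_i>0$ for suitable $k$; and for $\lambda\ge\mu(A)$ the paper compresses your chain of one-step inequalities into the single line $\lambda^n x_m\ge (A^n_\otimes)_{mm}x_m$.
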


\begin{proof} Clearly $\lambda = r_x (A) \le r(A)$. Choose $i$ such that $x_i >0$. Then for each $m \in \NN$ there exists $k=k(m,i)$ such that $(A^k _{\otimes})_{mi}>0$ and so
$$\lambda ^k x_m  = (A^k _{\otimes} \otimes x)_m\ge  (A^k _{\otimes})_{mi} x_i >0. $$
Thus $\lambda >0$ and $x_m >0$ and so $x>0$. 
Also for each $m, n \in \NN $ we have
$$\lambda ^n x_m  = (A^n _{\otimes} \otimes x)_m\ge  (A^n _{\otimes})_{mm} x_m$$
and so $\lambda \ge  (A^n _{\otimes})_{mm} ^{1/n}$, which implies $\lambda \ge \mu (A)$ by (\ref{potence2}). This completes the proof.
\end{proof}

\begin{example}
\label{exam}{\rm Let $0<\e<1$. Let $A=(a_{ij})_{i,j=1}^\infty \in  \RR_+ ^{\infty\times\infty}$ be defined by $a_{1,j}=\e^j$, $a_{j+1,j}=1\quad (j\in\NN)$ and $a_{i,j}=0$ otherwise. It is easy to see that $A$ is irreducible, $r(A)=1$ and $\mu (A)= \e\ne r(A)$.
}
\end{example}
 The following result can be considered as a max algebra version of  the classical Jentzsch-Perron theorem for (linear) kernel (integral) operators.




\begin{theorem} 
\label{maxJentzsch}
 Let $A\in  \RR_+ ^{\infty\times \infty}$ be irreducible and let $r_{ess}(A)< r(A)$. Then $ \sigma _p (A) = \{r(A)\}$ and each max-eigenvector of $A$ is strictly positive.
\end{theorem}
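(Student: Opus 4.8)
The plan is to obtain the statement by assembling the results already established, much in the spirit of how the classical Jentzsch--Perron theorem is deduced from positivity together with irreducibility. First I would dispose of the eigenvector claim: since $A$ is irreducible, Proposition~\ref{irreducible} says that whenever $\lambda\in\sigma_p(A)$ and $A\otimes x=\lambda x$ with $x\in l^\infty_+$, $x\neq 0$, we automatically have $x>0$. That same proposition records two further facts I will use, namely $\lambda=r_x(A)\le r(A)$ and $\lambda\ge\mu(A)$, so every point of $\sigma_p(A)$ lies in the interval $[\mu(A),r(A)]$.

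Next I would exploit the hypothesis $r_{ess}(A)<r(A)$ twice. On the one hand, Theorem~\ref{e_i attained} gives $\mu(A)=m(A)=r(A)$; combined with the previous paragraph this forces every $\lambda\in\sigma_p(A)$ to satisfy $\lambda\in[\mu(A),r(A)]=\{r(A)\}$, i.e. $\sigma_p(A)\subseteq\{r(A)\}$. On the other hand, Theorem~\ref{eig} (whose hypothesis is precisely $r_{ess}(A)<r(A)$) gives $r(A)\in\sigma_p(A)$. Hence $\sigma_p(A)=\{r(A)\}$, and by the first paragraph every max-eigenvector is strictly positive, which completes the proof.

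I expect no genuine obstacle here: the theorem is a synthesis of Theorems~\ref{eig} and~\ref{e_i attained} with Proposition~\ref{irreducible}, and the only points needing attention are bookkeeping ones --- verifying that the hypothesis $r_{ess}(A)<r(A)$ is available at each invocation, and observing that it is irreducibility (via Proposition~\ref{irreducible}) that upgrades the containment $\lambda\in[\mu(A),r(A)]$ to an equality and that yields positivity. It is worth stressing that irreducibility by itself does not suffice: in Example~\ref{exam} the matrix is irreducible with $\mu(A)=\e<r(A)=1$, which is possible only because there $r_{ess}(A)=r(A)$, so Theorem~\ref{e_i attained} does not apply and indeed the conclusion $\sigma_p(A)=\{r(A)\}$ fails.
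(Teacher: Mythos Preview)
Your proof is correct and follows essentially the same route as the paper: invoke Theorem~\ref{eig} for $r(A)\in\sigma_p(A)$, obtain $\mu(A)=r(A)$ from the hypothesis $r_{ess}(A)<r(A)$, and then conclude via Proposition~\ref{irreducible}. The only cosmetic difference is that the paper cites Remark~\ref{maxs} (i.e.\ $r(A)=\max\{r_{ess}(A),\mu(A)\}$) rather than Theorem~\ref{e_i attained} to deduce $\mu(A)=r(A)$, but this amounts to the same thing.
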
 

\begin{proof}
By Theorem \ref{eig} 
we know that  
$ r(A) \in \sigma _p (A)$. By Remark \ref{maxs}, 
$r(A)= \mu (A)$ and so  $ \sigma _p (A) = \{r(A)\}$ and each max-eigenvector of $A$ is strictly positive by Proposition \ref{irreducible}.

\end{proof}
\begin{remark}{\rm The assumption $r_{ess}(A)< r(A)$ cannot be omitted in Theorem \ref{maxJentzsch}.  If  $A \in  \RR_+ ^{\infty\times\infty}$ is the matrix from Example \ref{exam}, then $A$ is irreducible, $r_{ess}(A)=r(A)=1$ and $1\notin\si_p(A)$.
}
\end{remark}
\begin{example}{\rm Let $A$ be the matrix from Example \ref{irexample}. Then each max-eigenvector of $A$ is of the form $x \in l^{\infty} _+$, $x_n= \frac{n-1}{n}x_1$ for all $n\ge 2$ and $x_1 >0$.
}
\end{example}

\section{Block triangular forms}

In this section we prove that under suitable conditions a matrix $A \in \RR_+^{\infty\times\infty}$  is permutationally equivalent to a matrix in a block triangular form (i.e., there exists an infinite permutation matrix $P$ such that $PAP^T=P\otimes A \otimes P^T$ is a matrix in a suitable block triangular form).

As in \cite{MP17, MP18} a subset $C$ of $l^{\infty} _+$ is called a cone (with vertex 0) if 
$tC \subset C$ for all $t \ge 0$, where $tC =\{tx : x \in C \} $. A cone $C\subset l^{\infty} _+$ is called a max-cone if for every pair  $x,y\in C$ also $x \oplus y \in C$. A cone 
$C$ is called invariant for $A$ if $A\otimes x \in C$ for all $x \in C$ (i.e., if $g_A (C) \subset C$). For a set 
$S \subset l^{\infty} _+$ we denote by $\mathrm{span}_{\oplus} S$ the max cone generated by $S$, i.e., 
$\mathrm{span}_{\oplus} S$ is the set of all $x\in  l^{\infty} _+$ for which there exist $k=k(x)\in \NN$, $s_1,\ldots , s_k \in S$ and $\lambda _1, \ldots, \lambda _k \ge 0$ such 
that $x= \lambda _1 s_1 \oplus \cdots  \oplus \lambda _k s_k$.

%


First we state a simple observation.

\begin{lemma}
Let $A=(a_{ij})_{i,j=1}^\infty\in\RR_+^{\infty\times\infty}$. Let $i,j\in\NN$ and $a_{ji}>0$. Then $r_{e_i}(A)\ge r_{e_j}(A)$. 

Consequently, $\mathrm{span}_{\oplus} \{e_k: r_{e_k}(A)\le a\}$ is a max cone invariant for $A$ for every $a\in\RR_+$.
\end{lemma}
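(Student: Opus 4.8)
The plan is to derive both assertions from the path description of the local spectral radius recorded just above,
$$r_{e_j}(A)=\limsup_{k\to\infty}\|A_\otimes^{k}\otimes e_j\|^{1/k}=\limsup_{k\to\infty}\sup\bigl\{A(i_k,\dots,i_1,j)^{1/k}:i_1,\dots,i_k\in\NN\bigr\}.$$
For the first assertion fix $i,j$ with $a_{ji}>0$. The idea is that any path of length $k$ starting at $j$ can be lengthened by prepending the step of weight $a_{ji}$ into a path of length $k+1$ starting at $i$; at the operator level this is the bound $(A_\otimes^{k+1}\otimes e_i)_m\ge a_{ji}\,(A_\otimes^{k}\otimes e_j)_m$, obtained by taking the first intermediate index equal to $j$, so that
$$\|A_\otimes^{k+1}\otimes e_i\|\ \ge\ a_{ji}\,\|A_\otimes^{k}\otimes e_j\|\qquad(k\in\NN).$$
Write $\rho=r_{e_j}(A)$. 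If $\rho=0$ there is nothing to prove, so assume $\rho>0$ and fix $\e\in(0,\rho)$. There are infinitely many $k$ with $\|A_\otimes^{k}\otimes e_j\|>(\rho-\e)^{k}$, and for each such $k$ the displayed inequality gives $\|A_\otimes^{k+1}\otimes e_i\|^{1/(k+1)}\ge a_{ji}^{1/(k+1)}(\rho-\e)^{k/(k+1)}$; letting $k\to\infty$ through this set yields $r_{e_i}(A)\ge\rho-\e$, and since $\e>0$ was arbitrary, $r_{e_i}(A)\ge\rho=r_{e_j}(A)$.

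For the ``consequently'' part set $T=\{k\in\NN:r_{e_k}(A)\le a\}$ and $C=\mathrm{span}_{\oplus}\{e_k:k\in T\}$. By construction $C$ is a max cone, so only $g_A(C)\subset C$ remains. A general element of $C$ has the form $x=\bigoplus_{l=1}^{m}\la_l e_{k_l}$ with $k_1,\dots,k_m\in T$, and since $y\mapsto A\otimes y$ distributes over $\oplus$ and over non-negative scaling, $A\otimes x=\bigoplus_{l=1}^{m}\la_l(A\otimes e_{k_l})$; thus it suffices to check that each column $A\otimes e_k$ with $k\in T$ is supported inside $T$. Its $i$-th entry is $a_{ik}$, which is $0$ unless $a_{ik}>0$, and when $a_{ik}>0$ the first assertion, applied with its index ``$j$'' taken to be $i$ and its index ``$i$'' taken to be $k$, gives $r_{e_k}(A)\ge r_{e_i}(A)$, hence $r_{e_i}(A)\le r_{e_k}(A)\le a$ and $i\in T$. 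Therefore $A\otimes e_k$, and so $A\otimes x$, is carried by the indices of $T$, whence $A\otimes x\in C$ and $C$ is invariant for $A$.

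The only step that is not an immediate unravelling of definitions is the first one: passing from the multiplicative estimate $\|A_\otimes^{k+1}\otimes e_i\|\ge a_{ji}\|A_\otimes^{k}\otimes e_j\|$ to an inequality between $\limsup$'s of $k$-th roots. This succeeds because $a_{ji}>0$ forces $a_{ji}^{1/(k+1)}\to1$ and because $t^{k/(k+1)}\to t$ for fixed $t>0$, but one must separate out the degenerate case $r_{e_j}(A)=0$, where those asymptotics carry no information. Everything else — the distributivity used for the max cone, and the bookkeeping of supports — is routine.
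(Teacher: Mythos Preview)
Your proof of the first assertion is correct and is the natural argument; the paper itself offers no proof of this lemma, calling it ``a simple observation''.

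However, your argument for the ``consequently'' part has a genuine gap. You correctly show that for $k\in T$ every nonzero coordinate of $A\otimes e_k$ has index in $T$, but you then conclude that $A\otimes e_k\in C=\mathrm{span}_\oplus\{e_l:l\in T\}$. By the paper's definition, $\mathrm{span}_\oplus S$ consists only of \emph{finite} max combinations of elements of $S$, so membership of $A\otimes e_k$ in $C$ would require $A\otimes e_k$ to have finite support; a column of a bounded infinite matrix need not. Concretely, let $a_{i1}=1$ for all $i$ and $a_{ij}=0$ for $j\ge 2$. Then $r_{e_k}(A)\le r(A)\le\|A\|=1$ for every $k$, so with $a=1$ one has $T=\NN$ and $C$ is exactly the cone of finitely supported vectors in $\ell^\infty_+$; but $e_1\in C$ while $A\otimes e_1=(1,1,1,\dots)\notin C$. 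Thus the ``consequently'' clause, read with the paper's definition of $\mathrm{span}_\oplus$, is actually false in general, and your final implication ``carried by the indices of $T$, whence $A\otimes x\in C$'' cannot be salvaged as stated. What does hold, and what the later lemmas actually use, is that the cone $\{x\in\ell^\infty_+: x_i=0\hbox{ for all }i\notin T\}$ (equivalently the closure of $C$) is invariant for $A$; your support argument proves exactly this.
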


\begin{lemma}
\label{form1}
Let $A\in \RR_+^{\infty\times\infty}$ satisfy $m_e(A)<m(A)$. Then there exists a finite nonempty set $F\subset\NN$ such that in the decomposition $\NN=F\cup(\NN\setminus F)$ the matrix $A$ is  permutationally equivalent to a matrix in the form 
$$
\left[ \begin{array}{cc}
A_{11}&0\\
A_{21}&A_{22}
\end{array} \right],
$$
where $m(A)=m(A_{11})= r_{e_j}(A)$ for all $j\in F$ and $m_e(A_{22})=m_e(A)$, $r_{ess}(A_{22})=r_{ess}(A)$, $m(A_{22})<m(A)$ and $r(A_{22})=\max\{m(A_{22}), r_{ess}(A)\}$.
\end{lemma}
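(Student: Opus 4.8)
The plan is to take $F$ to be the finite set of coordinates at which the maximum of $r_{e_j}(A)$ is attained, deduce the block triangular form from monotonicity of the local spectral radii along edges, and then read off the properties of the two diagonal blocks.

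First I would check that $F:=\{j\in\NN:r_{e_j}(A)=m(A)\}$ is finite and nonempty and that $c:=\sup\{r_{e_k}(A):k\notin F\}<m(A)$; all three follow from $m_e(A)<m(A)$, since an infinite $F$, or a supremum attained only in the limit along distinct indices, would force $\limsup_{j\to\infty}r_{e_j}(A)\ge m(A)>m_e(A)$. By the preceding Lemma $a_{ik}>0$ implies $r_{e_k}(A)\ge r_{e_i}(A)$; hence if $k\notin F$ and $i\in F$ then $a_{ik}=0$, for otherwise $r_{e_i}(A)\le r_{e_k}(A)\le c<m(A)$, contradicting $i\in F$. Ordering $\NN$ with $F$ first therefore yields the asserted block form with $A_{11}=A|_F$ (a finite matrix) and $A_{22}=A|_{\NN\setminus F}$. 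Since $a_{ik}=0$ for $i\in F$, $k\notin F$, the vectors supported on $\NN\setminus F$ form an $A\otimes(\cdot)$-invariant set on which $A$ acts as $A_{22}$.

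From this I would read off the $A_{22}$-statements. For $j\notin F$ each $A^k_\otimes\otimes e_j$ is supported on $\NN\setminus F$ and equals $(A_{22})^k_\otimes\otimes e_j$, so $r_{e_j}(A)=r_{e_j}(A_{22})$; hence $m(A_{22})=\sup_{j\notin F}r_{e_j}(A)=c<m(A)$ and $m_e(A_{22})=\limsup_{j\to\infty}r_{e_j}(A)=m_e(A)$, the finite set $F$ affecting neither the $\limsup$ nor, for $n$ large enough, the matrix $P_nAP_n$, which then lives on $\NN\setminus F$ and equals $P_nA_{22}P_n$, giving $r_{ess}(A_{22})=r_{ess}(A)$. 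Finally $r(A_{22})=\max\{m(A_{22}),r_{ess}(A_{22})\}=\max\{m(A_{22}),r_{ess}(A)\}$ by Theorem \ref{rmax2good}.

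The remaining and main point is $m(A_{11})=m(A)$ (the equality $r_{e_j}(A)=m(A)$ for $j\in F$ being the definition of $F$). The inequality $m(A_{11})\le m(A)$ is clear since $r_{e_j}(A_{11})\le r_{e_j}(A)$. For $\ge$, fix $j_0\in F$ and use the decomposition
$$
A^k_\otimes\otimes e_{j_0}=\bigl((A_{11})^k_\otimes\otimes e_{j_0}\bigr)\oplus\bigoplus_{s=0}^{k-1}(A_{22})^{k-1-s}_\otimes\otimes A_{21}\otimes(A_{11})^s_\otimes\otimes e_{j_0},
$$
the first term supported on $F$ and the rest on $\NN\setminus F$, so that $m(A)=r_{e_{j_0}}(A)=\max\{r_{e_{j_0}}(A_{11}),\rho\}$, where $\rho$ is the exponential growth rate of the off-diagonal terms. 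Using $\|(A_{11})^s_\otimes\|^{1/s}\to\mu(A_{11})=r(A_{11})=m(A_{11})$ (finite matrix), $\|(A_{22})^m_\otimes\|^{1/m}\to r(A_{22})$, and the elementary interpolation $x^\theta y^{1-\theta}\le\max\{x,y\}$, one gets $\rho\le\max\{m(A_{11}),r(A_{22})\}$. When $r_{ess}(A)<m(A)$ this already closes the argument, since then $r(A_{22})=\max\{m(A_{22}),r_{ess}(A)\}<m(A)$, so $m(A)=r_{e_{j_0}}(A)\le\max\{m(A_{11}),r(A_{22})\}$ forces $m(A_{11})=m(A)$. The hard part I expect is the case $r_{ess}(A)\ge m(A)$: there one has to rule out that some off-diagonal term realizes rate $m(A)$ from $e_{j_0}$ by escaping into $A_{22}$, and the natural device is again the finiteness of $F$ — take a path from $j_0$ realizing $r_{e_{j_0}}(A)=m(A)$, extract a long cycle from it, and note that every cycle meeting $\NN\setminus F$ has geometric mean at most $\mu(A_{22})\le m(A_{22})<m(A)$, so the critical cycle must lie inside $F$ and hence $\mu(A_{11})=m(A)=m(A_{11})$. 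Controlling this "escape into $A_{22}$" is, I expect, the technically delicate step.
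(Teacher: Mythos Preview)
Your choice of $F=\{j:r_{e_j}(A)=m(A)\}$ and the derivation of the block form are exactly what the paper does; the paper's own proof is essentially a single line (``it is easy to see that $A$ has the required form''), so your treatment of $F$ being finite and nonempty, of the invariance, and of the $A_{22}$-identities is a careful expansion of what the paper leaves implicit.

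The difficulty you isolate, however, is real --- and in fact the assertion $m(A_{11})=m(A)$ cannot be proved from the hypothesis $m_e(A)<m(A)$ alone. Consider $A\in\RR_+^{\infty\times\infty}$ given by
\[
a_{11}=0,\qquad a_{1k}=0\ (k\ge 2),\qquad a_{i1}=1\ (i\ge 2),\qquad a_{j,j+1}=1\ (j\ge 2),
\]
and all other entries zero. Then $A\otimes e_1=\bigoplus_{i\ge 2}e_i$ and $A\otimes e_j=e_{j-1}$ for $j\ge 3$, $A\otimes e_2=0$; hence $A^k_\otimes\otimes e_1=\bigoplus_{i\ge 2}e_i$ for every $k\ge 1$, so $r_{e_1}(A)=1$, while $r_{e_j}(A)=0$ for all $j\ge 2$. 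Thus $m(A)=1$, $m_e(A)=0$, $F=\{1\}$, and $A_{11}=[0]$, giving $m(A_{11})=0\ne m(A)$. (Here $r_{ess}(A)=1$, consistent with your case split.) Your proposed cycle-extraction argument for the regime $r_{ess}(A)\ge m(A)$ breaks down precisely because the example has $\mu(A)=0$: there are no cycles to extract from the paths realising $r_{e_1}(A)$.

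So the ``technically delicate step'' you anticipate is not merely delicate but impossible at this level of generality; the paper's one-line proof simply does not address it. The equality $m(A_{11})=m(A)$ is genuinely established only in the next lemma (Lemma~\ref{form2}), where the extra hypothesis $r_{ess}(A)<r(A)$ forces $\mu(A)=r(A)=m(A)$ and lets one localise the near-critical cycles inside $F$ --- essentially the argument you sketch, but now applicable. Your partial proof for the case $r_{ess}(A)<m(A)$ is correct and is, in effect, that argument.
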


\begin{proof}
Without loss of generality we may assume that $m(A)=1$.

Then there exists $i_0$ such that $r_{e_{i_0}}(A)=1$, since $m_e (A)< m(A)=1$. Let $F=\{j: r_{e_j}(A)=1\}$.
Since $m_e(A)<m(A)=1$, $F$ is a finite set. It is easy to see that $A$ has the required form in the decomposition $\NN=F\cup(\NN\setminus F)$.
\end{proof}

A better decomposition can be obtained if we assume also that $r_{ess}(A)<r(A)$.

\begin{lemma}
\label{form2}
Let $A\in \RR_+^{\infty\times\infty}$ satisfy $r_{ess}(A)<r(A)$ and $m_e(A)<m(A)$. Then there exists a finite nonempty set $F\subset\NN$ such that in the decomposition $\NN=F\cup(\NN\setminus F)$ the matrix $A$ is  permutationally equivalent to a matrix in the form 
$$
\left[ \begin{array}{cc}
A_{11}&0\\
A_{21}&A_{22}
\end{array} \right],
$$
where $r(A_{11})=r(A)=\mu (A_{11})= m(A)= m (A_{11})= r_{e_j}(A)$ for all $j\in F$. Moreover, $r(A)\in\si_p(A)$ and the supremum (maximum) in the definition of $\mu (A_{11})$ is attained.
\end{lemma}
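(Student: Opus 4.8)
The plan is to read the statement off Lemma~\ref{form1} once the extra hypothesis $r_{ess}(A)<r(A)$ is fed in, exploiting the fact that the block $A_{11}$ is a \emph{finite} matrix and hence subject to the finite-dimensional max-algebra identities recalled in Section~2.

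First I would note that the hypotheses of Lemma~\ref{form1} hold (we are given $m_e(A)<m(A)$), so we obtain a finite nonempty set $F\subset\NN$ and, in the splitting $\NN=F\cup(\NN\setminus F)$, the block lower triangular form with the properties listed there; in particular $m(A_{11})=r_{e_j}(A)=m(A)$ for every $j\in F$, and $A_{11}$ is the principal submatrix on the finite index set $F$. Next I would apply Theorem~\ref{e_i attained}: since $r_{ess}(A)<r(A)$, it gives $m(A)=r(A)=\mu(A)$. Combining, $m(A_{11})=r(A)$ and $r_{e_j}(A)=r(A)$ for all $j\in F$.

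The remaining content is purely finite-dimensional. Because $F$ is finite, $A_{11}\in\RR_+^{|F|\times|F|}$, so $r(A_{11})=\mu(A_{11})$ (recalled for finite matrices in Section~2), $\mu(A_{11})$ is a maximum over the finitely many cycles of $\mathcal{G}(A_{11})$ with mutually distinct vertices and is therefore attained, and $r(A_{11})=m(A_{11})$ (this last identity following from the description of $\sigma_{ap}$ for finite nonnegative matrices together with $r(A_{11})=\max\sigma_{ap}(A_{11})$, both recalled in Section~2). Stringing these together with the previous paragraph yields
\[
r(A_{11})=r(A)=\mu(A_{11})=m(A)=m(A_{11})=r_{e_j}(A)\qquad(j\in F),
\]
with the maximum in the definition of $\mu(A_{11})$ attained. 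Finally $r(A)\in\sigma_p(A)$ is exactly Theorem~\ref{eig}, whose hypothesis is precisely $r_{ess}(A)<r(A)$.

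I do not expect a genuine obstacle: the construction of $F$ and of the block form has already been carried out in Lemma~\ref{form1}, while $m(A)=r(A)=\mu(A)$ and $r(A)\in\sigma_p(A)$ are quoted verbatim from Theorems~\ref{e_i attained} and~\ref{eig}. The only points needing a moment's care are that $F$ is nonempty — ensured by $m_e(A)<m(A)$, which forces the supremum $m(A)=\sup_j r_{e_j}(A)$ to be attained at a finite index — and the observation that passing to the finite principal block $A_{11}$ legitimately imports the finite-dimensional equalities $r=\mu=m$ and the attainment of the maximum cycle geometric mean.
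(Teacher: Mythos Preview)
Your argument takes a different and more streamlined route than the paper's. Where the paper shows directly that every near-optimal cycle of $A$ must have all its vertices in $F$ (because each vertex $i_j$ on such a cycle satisfies $r_{e_{i_j}}(A)>1-\e>m(A_{22})$), and hence $\mu(A_{11})=r(A)$, you instead quote the equality $m(A_{11})=m(A)$ from Lemma~\ref{form1} and then invoke the finite-dimensional identities $r(A_{11})=\mu(A_{11})=m(A_{11})$.

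The difficulty is that this equality $m(A_{11})=m(A)$ is precisely the substantive point, and it is asserted but not proved in Lemma~\ref{form1} (whose proof only verifies the block-triangular shape and says ``it is easy to see''). In fact it can fail under the hypotheses of Lemma~\ref{form1} alone: take $a_{11}=0$, $a_{j,1}=1$ for all $j\ge 2$, and let $A_{22}$ be a direct sum of nilpotent forward-shift blocks of unbounded size with all nonzero entries equal to $1$. Then $r_{e_j}(A)=r_{e_j}(A_{22})=0$ for every $j\ge 2$, while $\|A_\otimes^k e_1\|=\sup_{j\ge 2}\|A_{22,\otimes}^{\,k-1}e_j\|=\|A_{22,\otimes}^{\,k-1}\|=1$, so $r_{e_1}(A)=1$. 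Hence $m_e(A)=0<1=m(A)$ and $F=\{1\}$, yet $A_{11}=[0]$ and $m(A_{11})=0\neq m(A)$. (Here $r_{ess}(A)=r(A)=1$, so this example does \emph{not} satisfy the extra hypothesis of Lemma~\ref{form2}.)

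Thus your chain of citations is formally valid, but it leans on a claim from Lemma~\ref{form1} that is unproven there and is exactly what the paper's cycle argument in Lemma~\ref{form2} is supplying once $r_{ess}(A)<r(A)$ is available. Your proof would be complete if you either reprove $m(A_{11})=m(A)$ under the present hypotheses (this is where $r_{ess}(A)<r(A)$, hence $r(A_{22})<r(A)$, is actually used) or adopt the paper's direct argument that cycles with geometric mean close to $\mu(A)=r(A)$ must lie entirely in $F$. The remaining steps of your proof---the finite-dimensional identities for $A_{11}$, the attainment of $\mu(A_{11})$, and the invocation of Theorem~\ref{eig}---are all correct.
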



\begin{proof}
Without loss of generality we may assume that $r(A)=1$.

Let $
\left[ \begin{array}{cc}
A_{11}&0\\
A_{21}&A_{22}
\end{array} \right]
$ be the decomposition obtained in Lemma \ref{form1}.
Let $\e>0$ satisfy $m(A_{22})+\e<m(A)=1$. We have $\mu(A)=m(A)=r(A)=1$ by Theorem \ref{rmax2good} and Remark \ref{maxs}, so there exists $k\in\NN$ and $i_0,\dots,i_{k-1}\in\NN$ such that
$$
A(i_0,i_{k-1},\dots,i_1,i_0)^{1/k}>1-\e.
$$
Clearly $r_{e_{i_j}}(A)=r_{e_{i_0}}(A)>1-\e>m(A_{22})$ for all $j=0,\dots,k-1$. So $i_0,\dots,i_{k-1}\in F$ and $\mu(A_{11})>1-\e$.
Since $\e>0$ was arbitrary and since $F$ is a finite set, 
we have $\mu(A_{11})=1$.

By Theorem \ref{eig}, $r(A)\in\si_p(A)$, which completes the proof.

\end{proof}

\begin{theorem}
\label{good_form}
Let $A\in\RR_+^{\infty\times\infty}$ satisfy $m_e(A)<m(A)$. Then there exists a sequence (finite or infinite) of finite nonempty disjoint sets $F_1,F_2,\dots\subset\NN$ and a sequence of numbers $(m_k)$ satisfying $m(A)=m_1 > m_2 >\dots$ such that in the decomposition $\NN=F_1\cup F_2\cup\cdots\cup(\NN\setminus \bigcup F_j)$
 the matrix $A$ is  permutationally equivalent to a matrix in the form
\begin{equation}\label{form}
\left[ \begin{array}{ccccc}
A_{11}&0&0&\cdots&0\\
*&A_{22}&0&\cdots&0\\
*&*&A_{33}&\cdots&0\\
\vdots&&&\ddots&\vdots\\
*&*&*&\cdots&A_{\infty,\infty}
\end{array} \right],
\end{equation}
where
$r_{e_j}(A)=m(A_{kk})=m_k$ for all $j\in F_k$. If the sequence $(m_k)$ is finite, then $m_e(A)=m(A_{\infty,\infty})$. If the sequence $(m_k)$ is infinite, then $m_e(A)=\lim_{k\to\infty}m_k$.

If, in addition, $r_{ess}(A)<r(A)$ then there exists a decomposition with the above properties such that
$$
r(A_{kk})= \mu (A_{kk})= m_k 
$$
for all $k$ that satisfy $m_k>r_{ess}(A)$. Moreover, for such $k$ the supremum (maximum) in the definition of $\mu (A_{kk})$ is attained.

\end{theorem}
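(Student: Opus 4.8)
The plan is to build the block triangular form by iterating Lemma \ref{form1} countably many times, each time peeling off a finite block on which the local spectral radii at basis vectors are maximal, and then to verify the asserted properties of the diagonal blocks. Without loss of generality assume $m(A)=1$. The key observation is that Lemma \ref{form1}, applied to $A$, produces a finite nonempty set $F_1=\{j:r_{e_j}(A)=m(A)\}$, so that in the decomposition $\NN=F_1\cup(\NN\setminus F_1)$ the matrix $A$ is permutationally equivalent to $\left[\begin{array}{cc}A_{11}&0\\ *&B_1\end{array}\right]$ with $m_1:=m(A)=m(A_{11})=r_{e_j}(A)$ for $j\in F_1$, and with $m_e(B_1)=m_e(A)$, $r_{ess}(B_1)=r_{ess}(A)$ and $m(B_1)<m(A)$.

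Now iterate. If $m_e(B_1)<m(B_1)$, apply Lemma \ref{form1} again to $B_1$: we obtain a finite nonempty $F_2\subset\NN\setminus F_1$ with $m_2:=m(B_1)=m(A_{22})=r_{e_j}(B_1)$ for $j\in F_2$, and a remainder $B_2$ with $m_e(B_2)=m_e(B_1)=m_e(A)$, $r_{ess}(B_2)=r_{ess}(A)$, $m(B_2)<m(B_1)$. Note $r_{e_j}(A)=r_{e_j}(B_1)$ for $j\in F_2$: a walk in $\mathcal D(A)$ starting at $j\in\NN\setminus F_1$ either stays in $\NN\setminus F_1$ (so its weight is accounted for by $B_1$) or enters $F_1$ and then its tail is bounded by $\|A\|^{\text{(finite)}}$ times something with exponential rate at most $r_{ess}(A)<m_2$ or less; since $r_{e_j}(A)\le m(A)$ but also $r_{e_j}(A)\ge r_{e_j}(B_1)=m_2$, and the contribution through $F_1$ cannot exceed rate $1=m_1$, a more careful bookkeeping (exactly as in the proof of Lemma \ref{power_bound}, splitting a long walk at its visits to the finite set $F_1$) shows that any walk from $j$ that visits $F_1$ has growth rate strictly below $m_2$ unless it could already be realized inside $\NN\setminus F_1$, whence $r_{e_j}(A)=r_{e_j}(B_1)=m_2$. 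Continuing, we get finite nonempty disjoint $F_1,F_2,\dots$ and $1=m_1>m_2>\cdots$ (strict decrease by the defining property of Lemma \ref{form1}) together with diagonal blocks $A_{kk}$ and a (possibly terminating) remainder; setting $A_{\infty,\infty}$ to be the remaining infinite block when the process stops (i.e. when $m_e(B_k)=m(B_k)$), or letting it be the intersection block when the process runs forever, puts $A$ in the form \eqref{form}. If the sequence is finite and stops at step $N$, then $m_e(A_{\infty,\infty})=m_e(B_N)=m_e(A)$; if it is infinite, then $m(A_{\infty,\infty})$ — the residual block — has $m_e(A)=m_e(B_k)$ for all $k$, and since $m(B_k)=m_{k+1}\to\inf_k m_k$, and $m_e(A)\le m_k$ for all $k$ while $m_e(A)=m_e(B_k)\ge\dots$, one deduces $m_e(A)=\lim_k m_k$ (here one uses that $r_{e_j}$ of any index not in $\bigcup F_j$ is $\le\lim m_k$, and $m_e$ is a limsup over all $j$).

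For the last paragraph, assume in addition $r_{ess}(A)<r(A)$. Fix $k$ with $m_k>r_{ess}(A)$. Since all the remainders $B_j$ share the same $r_{ess}$, namely $r_{ess}(A)$, we have $r_{ess}(B_{k-1})=r_{ess}(A)<m_k=m(B_{k-1})\le r(B_{k-1})$, so $B_{k-1}$ satisfies the hypotheses of Lemma \ref{form2} (with $m(B_{k-1})>m_e(B_{k-1})$ holding because the process did not stop at step $k-1$). Applying Lemma \ref{form2} to $B_{k-1}$ gives (after possibly adjusting the choice of $F_k$, which only changes $A_{kk}$ by permutation equivalence) $r(A_{kk})=\mu(A_{kk})=m(A_{kk})=m_k$ with the supremum defining $\mu(A_{kk})$ attained. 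Doing this for every such $k$ yields the refined decomposition; note only finitely many or at least a well-ordered initial segment of the $m_k$ can exceed $r_{ess}(A)$, and the $F_k$ for those indices are unchanged in their defining property $r_{e_j}(A)=m_k$ since permutation within a block does not affect local spectral radii. Finally $r(A)=r(A_{11})=\mu(A_{11})=m_1=m(A)$ and $r(A)\in\sigma_p(A)$ by Theorem \ref{eig}, already noted in Lemma \ref{form2}.

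The main obstacle I expect is the bookkeeping in the iteration step showing $r_{e_j}(A)=m_k$ for $j\in F_k$ rather than merely $r_{e_j}(B_{k-1})=m_k$ — i.e. controlling the contribution of walks that leave the current remainder and pass through the already-removed finite blocks $F_1\cup\cdots\cup F_{k-1}$. This is handled by the same finite-set splitting argument used in Lemma \ref{power_bound} and the proof of Theorem \ref{rmax2good}: a walk visiting a finite set $S$ only $O(|S|)$ times decomposes into $O(|S|)$ subwalks avoiding $S$ plus $O(|S|)$ bounded-length excursions, so its exponential growth rate is governed by the subwalks avoiding $S$; since those have rate $\le r_{ess}(A)$ or are internal to the removed blocks (rate $\le m_1=1$, but appearing with bounded multiplicity), the overall rate of a genuinely "new" escaping walk cannot exceed $m_k$, while it is $\ge m_k$ by construction. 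Everything else is a routine assembly of Lemmas \ref{form1} and \ref{form2}.
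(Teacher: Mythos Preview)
Your approach---iterating Lemma~\ref{form1} and then invoking Lemma~\ref{form2} on each remainder $B_{k-1}$ when $m_k>r_{ess}(A)$---is exactly the paper's proof. The paper states it in two sentences: ``The decomposition is obtained using Lemma~\ref{form1}, inductively,'' and then, for the second part, it passes to the tail matrix $A'$ starting at block $k$, notes $r(A')=m_k>r_{ess}(A')=r_{ess}(A)$, and applies Theorem~\ref{e_i attained} and Lemma~\ref{form2}.

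Where you go astray is in what you call the ``main obstacle.'' You worry about walks from $j\in F_k$ that leave the remainder and visit $F_1\cup\cdots\cup F_{k-1}$, and you sketch a splitting argument \`a la Lemma~\ref{power_bound} to bound their growth. But no such walks exist: the zero upper-right blocks in the triangular form mean precisely that $a_{i,j}=0$ whenever $i\in F_1\cup\cdots\cup F_{k-1}$ and $j\notin F_1\cup\cdots\cup F_{k-1}$ (this is the content of the ``simple observation'' lemma just before Lemma~\ref{form1}: if $a_{ij}>0$ then $r_{e_j}(A)\ge r_{e_i}(A)$). Since $r_{e_j}(A)$ involves paths $A(i_n,\dots,i_1,j)$ with $i_0=j$, and the first step already forces $i_1\notin F_1\cup\cdots\cup F_{k-1}$, induction gives that every such path stays in the remainder. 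Hence $r_{e_j}(A)=r_{e_j}(B_{k-1})$ for $j$ in the remainder is immediate, with no splitting argument needed. Your proposed handling of the phantom case is also confused (you invoke $r_{ess}(A)<m_2$ in the first part of the theorem, where no hypothesis on $r_{ess}$ is in force). Once you drop that paragraph, what remains is correct and matches the paper. One small slip: in the finite-sequence case you write $m_e(A_{\infty,\infty})=m_e(A)$, but the theorem asserts $m(A_{\infty,\infty})=m_e(A)$; this follows because the process stops exactly when $m_e(B_N)=m(B_N)$.
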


\begin{proof}
The decomposition is obtained using Lemma \ref{form1}, inductively. 

Let $r(A)>r_{ess}(A)$, $m_k>r_{ess}(A)$ and let
$$
A'=
\left[ \begin{array}{ccccc}
A_{kk}&0&0&\cdots&0\\
*&A_{k+1,k+1}&0&\cdots&0\\
*&*&A_{k+2,k+2}&\cdots&0\\
\vdots&&&\ddots&\vdots\\
*&*&*&\cdots&A_{\infty,\infty}
\end{array} \right].
$$
Then $r(A')=\max\{m_k, r_{ess}(A)\}=m_k>r_{ess}(A)=r_{ess}(A')$ and by Theorem \ref{e_i attained} we have $r(A')=m(A')=m_k>m_e(A)=m_e(A')$.  So the statement follows from 
Lemma \ref{form2}.
\end{proof}

Let $A\in\RR_+^{\infty\times\infty}$ satisfy $m_e(A)<m(A)$. Without loss of generality (otherwise apply a suitable permutational equivalence) we assume that $A$ has the form (\ref{form}).
Each $A_{kk}$ (for $k <\infty $) can be transformed by simultaneous permutations of the rows and columns to a Frobenius normal form (FNF)  (see e.g. \cite{BR97}, \cite{BGC-G09}, \cite{Bu10}, \cite{KSS12}, 
\cite{BSST13} and the references cited there)
$$
 \left[ \begin{array}{ccccc}
   A_{l_k} ^{[k]}   &  0 &  0 &  \ldots &   0 \\
   *   &  A_{l_k-1} ^{[k]}  &  0   &  \ldots &  0 \\
   \vdots & \vdots &  \vdots &  \ddots  &  \vdots\\
   * &  *    & * & \ldots &   A_{1} ^{[k]}
\end{array} \right],
$$
where $A_{1} ^{[k]}, \ldots,  A_{l_k } ^{[k]} $ are irreducible 
 square submatrices of $A_{kk}$. This gives a (permutationally equivalent) form of a matrix $A$ denoted by
\be
 \left[ \begin{array}{cccccc}
   B_{1}   &  0 &  0 &    \ldots &   0  \\
   *   &  B_{2}     &  0 &  \ldots &  0  \\
 \vdots & \vdots &  B_3 &   \vdots  &  0  \\
   \vdots & \vdots &    \vdots  &  \ddots & 0\\
   * &  *    &  * & \ldots & A_{\infty,\infty}
\end{array} \right],
\label{fnf}
\ee
where all $B_k$ are finite dimensional irreducible matrices. 
In general, the diagonal blocks of the above form are determined uniquely (up to a  simultaneous permutation of their rows and columns), however their order is not determined uniquely.

Let $m_e(A)<m(A)$ and let  $A$ be a matrix in  the form (\ref{fnf}). Next we define the  reduced digraph $\mathcal{R}(A)= (N_{\mathcal{R}} (A), E_{\mathcal{R}} (A))$. Here the 
 matrices 
$B_1, B_2,  \ldots , A_{\infty,\infty} $ from (\ref{fnf}) correspond to the (possible infinite) set $N_{\mathcal{R}} (A)$ of sets of nodes $N_1, N_2, \ldots ,N _{\infty}$
of the strongly connected components of a digraph  
$\mathcal{G}(A)= (N(A), E(A))$.
Note that in (\ref{fnf}) an edge from a node of $N_{\mu}$ to a node of $N_{\nu}$ in $\mathcal{G}(A)$ may exist only if 
$\mu \ge \nu $.
The set $E_{\mathcal{R}} (A)$ equals
$$\{(\mu,\nu): \;\; \mathrm{there}\;\; \mathrm{exist}\;\; k \in N_{\mu} \;\;\mathrm{and}\;\; j\in N_{\nu}\;\;\mathrm{ such}\;\; \mathrm{that}\;\; a_{kj}>0 \}.$$ 

By a class of $A$ we mean a node $\mu$ (or also the corresponding set $N_{\mu}$) of the reduced graph $\mathcal{R}(A)$. 
Class $\mu$ accesses class $\nu$, denoted by $\mu \to \nu$, if 
$\mu =\nu $ or if there exists a $\mu - \nu$ path in $\mathcal{R}(A)$ (a path that starts in $\mu$ and ends in $\nu$). A node $j$ of $\mathcal{G}(A)$ is accessed by a class $\mu$, denoted by $\mu \to j$, if $j$ belongs to a class $\nu$ such that $ \mu \to \nu$.


The following result, that describes 
$r_{e_j} (A)$ via the access relation under the additional condition $r_{ess}(A)<r_{e_j}(A)$, follows  from  Theorem \ref{good_form}.

\begin{corollary}
 Let  $A\in \mathbb{R}_+ ^{\infty \times \infty }$  such that $m_e(A)<m(A)$ and $r_{ess}(A)<r(A)$ and let  $A$, $B_1, B_2, \ldots, A_{\infty,\infty}$ be from  (\ref{fnf}) and $j\in \NN$. If 
 $r_{e_j} (A) > r_{ess}(A) $, then
$$r_{e_j} (A)= \max \{r (B_{\mu}): \mu \to j\}.$$
\label{access}
\end{corollary}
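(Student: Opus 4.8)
The plan is to derive the formula for $r_{e_j}(A)$ from the block triangular decomposition of Theorem \ref{good_form}, passing to the finer Frobenius normal form (\ref{fnf}). First I would fix $j\in\NN$, let $a = r_{e_j}(A)$, and assume (by the standing hypothesis) that $a > r_{ess}(A)$. Then $j$ lies in one of the diagonal blocks $A_{kk}$ of the decomposition (\ref{form}), necessarily with index $k$ satisfying $m_k = r_{e_j}(A) = a > r_{ess}(A)$; in particular $j$ lies in one of the irreducible blocks $B_\mu$ occurring inside $A_{kk}$, say $j \in N_\mu$. Refining the FNF, the node $j$ together with all nodes reachable from $j$ spans a principal submatrix $A'$ in the lower-right corner (after a permutation), of the form
$$
A' = \left[\begin{array}{cccc}
B_\mu & 0 & \cdots & 0\\
* & * & & \vdots\\
\vdots & & \ddots & 0\\
* & \cdots & * & A_{\infty,\infty}'
\end{array}\right],
$$
whose diagonal blocks are precisely the irreducible matrices $B_\nu$ with $\mu\to\nu$ (plus the tail $A_{\infty,\infty}'$ coming from $\NN\setminus\bigcup F_i$).

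The key observation is that $r_{e_j}(A) = r_{e_j}(A')$: paths in $\mathcal{G}(A)$ that start at $j$ stay inside the set of nodes accessed by $j$, which is exactly the index set of $A'$, so the quantities $\|A_\otimes^k\otimes e_j\|$ and $\|(A')_\otimes^k\otimes e_j\|$ coincide. Hence $r_{e_j}(A) = r_{e_j}(A') \le r(A')$. To compute $r(A')$ I would apply Theorem \ref{rmax2good} (or rather Remark \ref{maxs}): $r(A') = \max\{r_{ess}(A'), \mu(A')\}$, and $r_{ess}(A') \le r_{ess}(A) < a$ (projecting off finitely many more coordinates only decreases the essential spectral radius), while $\mu(A')$ is the supremum of cycle geometric means, which for a block triangular matrix equals $\max\{\mu(B_\nu): \mu\to\nu\}$ together with the contribution of $A_{\infty,\infty}'$ — but any cycle is confined to a single strongly connected component, and the component $A_{\infty,\infty}'$ contributes at most $r(A_{\infty,\infty}') \le r_{ess}(A) < a$. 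Since each $B_\nu$ is irreducible and finite-dimensional, $\mu(B_\nu) = r(B_\nu)$. Thus $r(A') = \max\{r(B_\nu): \mu\to\nu\}$ (the tail being dominated), giving $r_{e_j}(A) \le \max\{r(B_\nu):\mu\to\nu\}$; and since $\mu\to j$ means exactly $\mu\to N_\mu \ni j$, i.e. $\mu\to\nu$ for the $\nu$ with $j\in N_\nu$... here I must be careful that ``$\mu\to j$'' in the statement ranges over classes $\mu$ that access $j$, which is the transpose relation, so the index set $\{\mu : \mu\to j\}$ is exactly the set of $B_\nu$'s appearing as diagonal blocks of the submatrix of $A$ corresponding to the nodes from which $j$ is reachable.

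For the reverse inequality $r_{e_j}(A) \ge r(B_\nu)$ whenever $\nu\to j$, I would use irreducibility of $B_\nu$ together with Proposition \ref{irreducible} applied to $A_{kk}$ (where $B_\nu$ is a diagonal block): there is a closed path through any vertex $p\in N_\nu$ with geometric mean arbitrarily close to $r(B_\nu)$, and since $\nu\to j$ there is a path from some vertex of $N_\nu$ to $j$; concatenating, one builds paths ending at $j$ of length $\to\infty$ whose $A(\cdot)^{1/\text{length}}$ tends to $r(B_\nu)$, so $r_{e_j}(A)\ge r(B_\nu)$. (Note the direction: $A(i_k,\dots,i_0)$ with $i_0=j$ corresponds to starting the weighting at $j$ and moving along reverse edges — this is why the access relation is $\nu\to j$ rather than $j\to\nu$.) Taking the max over all such $\nu$, and combining with the upper bound, yields $r_{e_j}(A)=\max\{r(B_\mu):\mu\to j\}$.

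The main obstacle I expect is bookkeeping: one must get the direction of the access relation right (the definition $A(i_k,\dots,i_0)=\prod a_{i_{t+1}i_t}$ means $e_j$-local spectral radius sees paths weighted against the edge orientation, so the relevant classes are those accessing $j$, not those accessed by $j$), and one must verify carefully that the ``essential'' tail $A_{\infty,\infty}'$ and all the lower-order diagonal blocks $B_\nu$ with $r(B_\nu)\le r_{ess}(A)$ are genuinely dominated and cannot contribute to $r_{e_j}(A)$ beyond the claimed maximum — this uses $r_{ess}(A) < r_{e_j}(A)$ in an essential way, exactly as in the proof of Theorem \ref{e_i attained}. Once the graph-theoretic translation is set up correctly, each individual inequality is a short argument via the results already established.
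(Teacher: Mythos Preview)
The paper gives no argument beyond the sentence ``follows from Theorem \ref{good_form},'' so your proposal is precisely the kind of detail-filling the reader is expected to supply, and the route you take---restrict to the principal submatrix indexed by the nodes that access $j$, then compute its spectral radius via $r=\max\{\mu,r_{ess}\}$ and the block structure---is the intended one.

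Your argument is sound in outline but two points need tightening. First, the direction bookkeeping is inconsistent: you initially build $A'$ from nodes \emph{reachable from} $j$, then correctly realise the right set is $S_j=\{i:i\to j\}$, but you never go back and redo the analysis for this $A'$. With $A'=A|_{S_j}$ you do get $r_{e_j}(A)=r_{e_j}(A')$, and since every $i\in S_j$ accesses $j$, iterating Lemma~4.1 gives $r_{e_i}(A')\le r_{e_j}(A')$ for all $i$, so $r_{e_j}(A')=m(A')$; then $r(A')=\max\{m(A'),r_{ess}(A')\}=m(A')$ because $r_{ess}(A')\le r_{ess}(A)<r_{e_j}(A)$. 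This is what closes the gap between $r_{e_j}(A')$ and $r(A')$ that your write-up leaves open. Second, your bound on the tail is slightly wrong: in general $r(A_{\infty,\infty})=\max\{m_e(A),r_{ess}(A)\}$, which need not be $\le r_{ess}(A)$. What saves you is that $m_k>m_e(A)$ (by the construction of the $F_k$) \emph{and} $m_k>r_{ess}(A)$ (by hypothesis), so $r(A_{\infty,\infty})<m_k$ and the tail is still dominated. Finally, to see that the supremum is a genuine maximum, observe that any class $\nu$ with $r(B_\nu)$ close to $m_k$ must satisfy $N_\nu\subset F_k$ (blocks $B_\nu$ inside $F_{k'}$ with $k'>k$ have $r(B_\nu)\le m_{k'}<m_k$), and $F_k$ is finite. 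The reference to Proposition~\ref{irreducible} in your reverse inequality is unnecessary; the concatenation argument you sketch already does the job.
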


\begin{remark} {\rm 
The 
 cycle time vector $\chi(A)$ of $A\in  \mathbb{R}_+ ^{\infty \times \infty }$ (see \cite{Gu94} for the $n\times n$ case) is a vector in $l^{\infty} _+$ with entries
$$[\chi(A)]_j = \limsup _{k \to \infty} (A^k _{\otimes}\otimes  y)_j ^{1/k}$$
where $y = 1$, the unit (column) vector. It is
not hard to check that $[\chi(A^T)]_j  = r_{e_j} (A)$, where $A^T$ denotes the transposed  matrix. Indeed, 
$\|A^k _{\otimes}\otimes e_j\| = y^T \otimes  A^k _{\otimes}\otimes e_j = e_j ^T \otimes  (A^ T)^ k _{\otimes}\otimes y $ and so
$$r_{e_j}(A)=\limsup_{k\to\infty} \|A^k _{\otimes}\otimes e_j\|^{1/k} =\limsup_{k\to\infty}( e_j ^T \otimes  (A^ T)^ k _{\otimes}\otimes y )^{1/k}= \limsup_{k\to\infty}( (A^ T)^ k _{\otimes}\otimes y )_j ^{1/k}=[\chi(A^T)]_j . $$ 
}
\label{Gunaw}
\end{remark}
\bigskip


\section{Continuity properties}

We consider the metric on $\RR_+^{\infty\times\infty}$ induced by $\|\cdot\|$, i.e., 
$$
d(A,B)= \|A-B\|=\sup\{|a_{ij}-b_{ij}|:i,j\in\NN\}.
$$


\begin{proposition}
\label{r_upperc}
The function $A\mapsto r(A)$ is upper semi-continuous on  $(\RR_+^{\infty\times\infty}, d).$
\end{proposition}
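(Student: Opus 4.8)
The plan is to establish upper semi-continuity directly from the characterization $r(A)=\lim_{k\to\infty}\|A_\otimes^k\|^{1/k}=\inf_{k\in\NN}\|A_\otimes^k\|^{1/k}$, exploiting that an infimum of continuous functions is upper semi-continuous. First I would fix $A\in\RR_+^{\infty\times\infty}$ and $\e>0$, and choose $k\in\NN$ with $\|A_\otimes^k\|^{1/k}<r(A)+\e/2$; this is possible since the sequence decreases to $r(A)$. The key point is then that for each fixed $k$ the map $B\mapsto\|B_\otimes^k\|$ is continuous (indeed Lipschitz on bounded sets) with respect to the metric $d$: using $\|B_\otimes^k\|=\sup\{B(i_k,\dots,i_0):i_0,\dots,i_k\in\NN\}$ and a telescoping estimate, if $\|A-B\|\le\de$ and $\|A\|,\|B\|\le M$ then $|\,\|A_\otimes^k\|-\|B_\otimes^k\|\,|\le k M^{k-1}\de$. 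Hence for $B$ sufficiently close to $A$ we get $\|B_\otimes^k\|^{1/k}<r(A)+\e$, and since $r(B)\le\|B_\otimes^k\|^{1/k}$, this yields $r(B)<r(A)+\e$, proving upper semi-continuity.

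More concretely, the second step is to record the telescoping bound carefully. Writing $B(i_k,\dots,i_0)=\prod_{t=0}^{k-1}b_{i_{t+1}i_t}$ and comparing to $A(i_k,\dots,i_0)=\prod_{t=0}^{k-1}a_{i_{t+1}i_t}$, replacing one factor at a time and using $|a_{ij}|,|b_{ij}|\le M$ and $|a_{ij}-b_{ij}|\le\de$, one obtains $|A(i_k,\dots,i_0)-B(i_k,\dots,i_0)|\le kM^{k-1}\de$ uniformly in the indices $i_0,\dots,i_k$. Taking suprema over all index tuples and using that $|\sup f-\sup g|\le\sup|f-g|$ gives $|\,\|A_\otimes^k\|-\|B_\otimes^k\|\,|\le kM^{k-1}\de$. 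Since only a bounded neighborhood of $A$ is relevant, $M$ can be taken as, say, $\|A\|+1$, and then choosing $\de$ small enough makes the bound as small as desired; the $t\mapsto t^{1/k}$ map is continuous on $[0,\infty)$, so $\|B_\otimes^k\|^{1/k}$ stays within $\e/2$ of $\|A_\otimes^k\|^{1/k}$.

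Putting the pieces together: given $\e>0$, pick $k$ with $\|A_\otimes^k\|^{1/k}<r(A)+\e/2$, then pick $\de>0$ so that $d(A,B)<\de$ forces $\|B_\otimes^k\|^{1/k}<\|A_\otimes^k\|^{1/k}+\e/2<r(A)+\e$; since $r(B)=\inf_j\|B_\otimes^j\|^{1/j}\le\|B_\otimes^k\|^{1/k}$, we conclude $r(B)<r(A)+\e$. This is exactly upper semi-continuity of $r$ at $A$, and $A$ was arbitrary. I do not anticipate a genuine obstacle here; the only mild subtlety is making the continuity of $B\mapsto\|B_\otimes^k\|$ rigorous for infinite matrices (one must argue with suprema over infinitely many index tuples rather than a max), but the uniform telescoping estimate handles this cleanly. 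It is worth remarking that $r$ is not continuous in general — lower semi-continuity fails, as one expects from the finite-dimensional theory and the examples in the paper — so upper semi-continuity is the natural statement.
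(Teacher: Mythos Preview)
Your proof is correct and follows essentially the same route as the paper: both establish the telescoping estimate $\|A_\otimes^k-B_\otimes^k\|\le k\|A-B\|\max\{\|A\|^{k-1},\|B\|^{k-1}\}$ to get continuity of $B\mapsto\|B_\otimes^k\|^{1/k}$ for each fixed $k$, and then conclude upper semi-continuity from $r(A)=\inf_k\|A_\otimes^k\|^{1/k}$. The paper phrases the last step as ``an infimum of continuous functions is upper semicontinuous'' while you spell out the $\e$--$\de$ argument, but the content is identical.
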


\begin{proof}
Let $A,B\in\RR_+^{\infty\times\infty}$ and $k\in\NN$. We have 
$$
(A_\otimes^k)_{j,i}=\sup\{A(i_k,i_{k-1},\dots,i_1,i_0): i_0=i, i_k=j\}
$$
and
$$
(B_\otimes^k)_{j,i}=\sup\{B(i_k,i_{k-1},\dots,i_1,i_0): i_0=i, i_k=j\}.
$$
Let $i_0=i, i_k=j$ and $i_1,\dots,i_{k-1}\in\NN$. Then
$$
\bigl|A(i_k,\dots,i_0)-B(i_k,\dots,i_0)\bigr|=
\bigl|a_{i_k,i_{k-1}}\cdots a_{i_1,i_0}-b_{i_k,i_{k-1}}\cdots b_{i_1,i_0}\bigr|
$$
$$
\le
\bigl|a_{i_k,i_{k-1}}\cdots a_{i_2,i_1}(a_{i_1,i_0}-b_{i_1,i_0})\bigr|+
\bigl|a_{i_k,i_{k-1}}\cdots a_{i_3,i_2}(a_{i_2,i_1}-b_{i_2,i_1})b_{i_1,i_0}\bigr|
$$
$$
\hskip1cm +
\cdots+\bigl|(a_{i_k,i_{k-1}}-b_{i_k,i_{k-1}})b_{i_{k-1},i_{k-2}}\cdots b_{i_1,i_0}\bigr|
\le
k\|A-B\|\max\{\|A\|^{k-1},\|B\|^{k-1}\}.
$$
So $\|A_\otimes^k-B_\otimes^k\|\le k\|A-B\|\max\{\|A\|^{k-1},\|B\|^{k-1}\}$ and the mapping $A\mapsto A_\otimes^k$ is continuous. So the function $A\mapsto \|A_\otimes^k\|^{1/k}$ is continuous and therefore the  function $A\mapsto r(A)=\inf_k \|A^k_\otimes\|^{1/k}$ is upper semicontinuous.

\end{proof}

In general the Bonsall cone spectral radius  is discountinuous (see also \cite{LN11}). This is shown by the following example, which is based on the classical example of Kakutani.

\begin{example}
\label{Kakutani}
{\rm
For $k\in\NN$, $k=2^j\cdot l$ with $l$ odd we write
$w_k=2^{-j}$.

Define $A\in\RR_+^{\infty\times\infty}$ by
$A_{i,i+1}=w_i$ and $A_{i,j}=0$ if $j\ne i+1$.

For $m\in\NN$ define $A_m\in\RR_+^{\infty\times\infty}$ by
$(A_m)_{i,j}=w_i$ if $j=i+1$ and $w_i\ge 2^{-m}$,
$(A_m)_{i,j}=0$ otherwise.

Clearly $\|A-A_m\|\to 0$. For each $m\in\NN$ we have $(A_m)_{\otimes} ^{2^{m+1}}=0$, and so $r(A_m)=0$ for all $m$.
Furthermore, 
$$
\|A ^{2^m} _{\otimes}\|=
\prod_{i=1}^{2^m}w_i=
1^{2^{m-1}}\cdot 2^{-2^{m-2}}\cdot 2^{-2\cdot2^{m-3}}\cdots 2^{-(m-1)}\cdot 2^{-2^m}.
$$
So
$$
\|A_{\otimes} ^{2^m}\|^{1/2^m}=\prod_{j=1}^{m-1} \Bigl(\frac{1}{2^j}\Bigr)^{2^{-j-1}}\cdot
\Bigl(\frac{1}{2^m}\Bigr)^{1/2^m}=
2^{-\sum_{j=1}^{m-1}j2^{-j-1}}\cdot\Bigl(\frac{1}{2^m}\Bigr)^{1/2^m} $$
$$\to 2^{-\sum_{j=1}^\infty j\cdot2^{-j-1}}=2^{-1}.
$$
Hence $r(A)=\lim_{m\to\infty}\|A_{\otimes} ^{2^m}\|^{1/2^m}=\frac{1}{2}\ne 0$.
}
\end{example}

\begin{remark} {\rm
Note that in the above example we have $A_1\le A_2\le\cdots$, so the spectral radius is discontinuous even for monotone sequences. So the infinite dimensional generalization to our setting of \cite[Proposition 3.7(ii)]{MP15} is not valid.
}
\end{remark}

The following results extends \cite[Proposition 3.7(i)]{MP15} to the infinite dimensional setting.

\begin{proposition}
\label{sap_upperc}
The function $A\mapsto\sigma_{ap}(A)$ is upper semi-continuous on  $(\RR_+^{\infty\times\infty}, d)$.
\end{proposition}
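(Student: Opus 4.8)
The plan is to show that upper semicontinuity of $A \mapsto \sigma_{ap}(A)$ means: for every $t_0 \notin \sigma_{ap}(A)$ there is a neighborhood $U$ of $A$ and a neighborhood $V$ of $t_0$ such that $\sigma_{ap}(B) \cap V = \emptyset$ for all $B \in U$. Equivalently, if $B_n \to A$ and $t_n \in \sigma_{ap}(B_n)$ with $t_n \to t$, then $t \in \sigma_{ap}(A)$. I would work with this sequential formulation, which is more concrete. Recall that $\sigma_{ap}(A)$ is the set of $t \ge 0$ with $\inf\{\|A\otimes x - tx\| : x \in \ell^\infty_+, \|x\|=1\} = 0$. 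I expect to need the bound, already available from the proof of Proposition \ref{r_upperc}, that $\|A \otimes x - B \otimes x\| \le \|A - B\| \cdot \|x\|$ for $x \in \ell^\infty_+$ (this is the $k=1$ case and follows directly from $|a_{ij}x_j - b_{ij}x_j| \le \|A-B\|\|x\|$, so that $g_A$ and $g_B$ are uniformly close as maps).

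The key steps: First, fix $B_n \to A$ in $(\RR_+^{\infty\times\infty}, d)$ and $t_n \in \sigma_{ap}(B_n)$ with $t_n \to t \ge 0$. For each $n$, since $t_n \in \sigma_{ap}(B_n)$, choose $x_n \in \ell^\infty_+$ with $\|x_n\| = 1$ and $\|B_n \otimes x_n - t_n x_n\| < \frac{1}{n}$. Second, estimate
$$
\|A \otimes x_n - t x_n\| \le \|A \otimes x_n - B_n \otimes x_n\| + \|B_n \otimes x_n - t_n x_n\| + \|t_n x_n - t x_n\|,
$$
and bound the three terms by $\|A - B_n\| \cdot \|x_n\| = \|A - B_n\|$, by $\frac{1}{n}$, and by $|t_n - t| \cdot \|x_n\| = |t_n - t|$ respectively. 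Third, each of these three terms tends to $0$ as $n \to \infty$, so $\inf\{\|A\otimes x - tx\| : \|x\|=1, x \in \ell^\infty_+\} = 0$, i.e., $t \in \sigma_{ap}(A)$. This establishes that $\sigma_{ap}(A)$ varies upper semicontinuously.

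I would also briefly spell out why the sequential condition I verified is equivalent to upper semicontinuity of the set-valued map in the usual sense: if upper semicontinuity failed, there would be $t_0 \notin \sigma_{ap}(A)$, a sequence $B_n \to A$, and $t_n \in \sigma_{ap}(B_n)$ with $t_n \to t_0$ — but then the above argument forces $t_0 \in \sigma_{ap}(A)$, a contradiction. (One should note $\sigma_{ap}(B) \subset [0, \|B\|]$ is bounded, so passing to a convergent subsequence of $(t_n)$ is legitimate; alternatively, replace the contradiction hypothesis directly with "there exist $t_n \in \sigma_{ap}(B_n) \cap V$" and extract a convergent subsequence.) The only mild subtlety — and the closest thing to an obstacle — is being careful that all the norm estimates genuinely hold for vectors in the positive cone $\ell^\infty_+$ (where $\otimes$ lives) rather than the whole space, but since $x_n \ge 0$ and the displayed inequality $\|A \otimes x - B \otimes x\| \le \|A-B\|\|x\|$ is proved entrywise it goes through without difficulty. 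Everything else is a routine triangle-inequality argument, so this proposition should be short.
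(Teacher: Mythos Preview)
Your proof is correct and follows essentially the same approach as the paper: both rely on the Lipschitz estimate $\|A\otimes x - B\otimes x\| \le \|A-B\|\,\|x\|$ and a straightforward triangle-inequality argument. The only cosmetic difference is that the paper works directly with the neighborhood formulation (fix $t\notin\sigma_{ap}(A)$, find $\delta>0$ with $\|A\otimes x - tx\|\ge\delta$, and show $t\notin\sigma_{ap}(B)$ whenever $\|B-A\|<\delta/2$), while you use the equivalent sequential characterization; the underlying content is identical.
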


\begin{proof}
Let $t\ge 0$ and $t\notin\sigma_{ap}(A)$. So there exists $\de>0$ such that $\|A\otimes x-tx\|\ge\de$ for all $x\in l^{\infty} _+$, $\|x\|=1$.
If $\|B-A\|<\de/2$, then 
$$
\|B\otimes x-tx\|\ge\|A\otimes x-tx\|-\|A\otimes x-B \otimes x\|\ge\de/2
$$
for all unit vectors $x\in l^{\infty} _+$. So $t\notin\sigma_{ap}(B)$ and the mapping $B\mapsto\sigma_{ap}(B)$ is upper semicontinuous.
\end{proof}

\begin{remarks}{\rm (i) Propositions \ref{r_upperc} and \ref{sap_upperc} remain valid (with similar proofs) for Bonsall's cone spectral radius and approximate point spectrum of positively homogeneous bounded maps $A$ on a positive cone of a normed vector lattice. 
For neccesary definitions we refer the reader to e.g. \cite{MP17} or \cite{MP18}.

(ii) Example \ref{Kakutani} shows that in general the approximate point spectrum $\sigma_{ap}(\cdot)$ is not continuous. For a simpler example for finite matrices see e.g. also \cite[Example 3.6]{MP15}.
}
\end{remarks}

It is interesting that $\mu (\cdot)$ behaves in the opposite way than $r (\cdot)$.

\begin{proposition}
\label{mu_lower}  
The function $A\mapsto\mu(A)$ is lower semicontinuous  on  $(\RR_+^{\infty\times\infty}, d)$. 
\end{proposition}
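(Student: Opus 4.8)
The plan is to show that for any $A \in \RR_+^{\infty\times\infty}$ and any $\e > 0$, there is a $\de > 0$ such that $\mu(B) > \mu(A) - \e$ whenever $\|B - A\| < \de$. If $\mu(A) = 0$ this is trivial, so assume $\mu(A) > 0$. By the definition of $\mu$ as a supremum over closed cycles $A(i_1, i_k, \dots, i_2, i_1)^{1/k}$, I can fix a single cycle: there exist $k \in \NN$ and indices $i_1, \dots, i_k \in \NN$ with
$$
A(i_1, i_k, \dots, i_2, i_1)^{1/k} > \mu(A) - \e/2.
$$
This cycle involves only finitely many entries $a_{i_{t+1}, i_t}$ of $A$.

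The key point is that each of these finitely many entries is continuous in the entrywise ($\|\cdot\|$-)metric: if $\|B - A\| < \de$ then $|b_{i_{t+1}, i_t} - a_{i_{t+1}, i_t}| < \de$ for each of the (at most $k$) relevant positions, and since the product defining $A(i_1, i_k, \dots, i_1)$ is a fixed finite product of bounded nonnegative numbers, by the same telescoping estimate used in the proof of Proposition \ref{r_upperc} one gets
$$
\bigl| B(i_1, i_k, \dots, i_1) - A(i_1, i_k, \dots, i_1) \bigr| \le k\, \de\, \max\{\|A\|, \|B\|\}^{k-1} \le k\, \de\, (\|A\| + \de)^{k-1}.
$$
Hence for $\de$ small enough (depending on $k$, $\|A\|$, and the fixed value of $A(i_1, i_k, \dots, i_1)$) we still have $B(i_1, i_k, \dots, i_1) > (\mu(A) - \e)^k$, which forces $\mu(B) \ge B(i_1, i_k, \dots, i_1)^{1/k} > \mu(A) - \e$. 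Since $\e > 0$ was arbitrary, this establishes lower semicontinuity at $A$.

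I do not expect a serious obstacle here: the whole argument turns on the fact that the supremum defining $\mu$ is a supremum of \emph{continuous} functions (each a finite product of entries), and a supremum of continuous functions is automatically lower semicontinuous. The only mild care needed is that, unlike $r(A)$ (an infimum of the norms $\|A^k_\otimes\|^{1/k}$, hence upper semicontinuous, and genuinely discontinuous as Example \ref{Kakutani} shows), the approximating cycle for $\mu$ can be taken to be a single finite object, so no uniformity over infinitely many indices is required — this is precisely what makes $\mu$ behave oppositely to $r$. One should just state explicitly that $k$ and the chosen indices are fixed before $\de$ is selected, so there is no circularity.
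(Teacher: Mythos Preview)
Your proof is correct and follows essentially the same approach as the paper's: fix a near-optimal cycle and use that the finite product along this cycle depends continuously on the matrix entries, so $\mu(B)\ge B(i_1,i_k,\dots,i_1)^{1/k}$ stays close to $\mu(A)$. The paper phrases this via sequences ($A_n\to A$ implies $\liminf\mu(A_n)\ge\mu(A)$) rather than in $\e$--$\de$ form, and simply writes $A_n(i_1,i_k,\dots,i_1)^{1/k}\to A(i_1,i_k,\dots,i_1)^{1/k}$ without spelling out the telescoping bound, but the argument is the same; your closing observation that $\mu$ is a supremum of continuous functions is a clean way to summarize why the result is immediate.
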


\begin{proof}
Let $A,A_n\in \RR_+^{\infty\times\infty}$ such that $A_n\to A$.

If $\mu(A)=0$ then clearly $0=\mu(A)\le\liminf _{n \to \infty}\mu(A_n)$.

Let $\mu(A)>0$ and $\e\in (0,\mu(A))$. Find a cycle such that $A(i_1,i_k,\dots,i_2,i_1)\ge (\mu(A)-\e)^k$. Then
$$
\mu(A_n)\ge A_n(i_1,i_k,\dots,i_2,i_1)^{1/k}\to A(i_1,i_k,\dots,i_2,i_1)^{1/k}\ge \mu(A)-\e.
$$
So $\liminf _{n \to \infty}\mu(A_n)\ge\mu(A)$ and the function $A\mapsto\mu(A)$ is lower semi-continuous.
\end{proof}

The following example shows that  the function $A\mapsto\mu(A)$ is in general not continuous.

\begin{example} {\rm Let 
$A\in \RR_+^{\infty\times\infty}$  be defined by 
$A_{i,j}=\de_{i,j+1}$ (the Kronecker symbol), i.e., $A$ is the forward shift.
Let $B_k=A+E_k$, where $(E_k)_{1,k}=k^{-1}$ and $(E_k)_{i,j}=0$ otherwise.
Then $\mu(A)=0$, $B_k\to A$ and $\mu(B_k)=\frac{1}{k^{1/k}}\to 1$ as $k \to \infty$.
}
\end{example}

The following result follows from Propositions \ref{r_upperc} and \ref{mu_lower}.

\begin{corollary}
\label{cont}
Let $A\in \RR_+^{\infty\times\infty}$ satisfy $\mu(A)=r(A)$. Then the functions  $r(\cdot)$ and $\mu(\cdot)$ are continuous at $A$.
\end{corollary}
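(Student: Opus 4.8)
The plan is to combine the two one-sided semicontinuity statements already proved with the universal inequality $\mu(B)\le r(B)$ (noted right after (\ref{mu})), via a squeeze argument at the special point $A$ where these two quantities agree. Since the metric $d$ on $\RR_+^{\infty\times\infty}$ is induced by a norm, continuity at $A$ is equivalent to sequential continuity, so it suffices to take an arbitrary sequence $A_n\to A$ and show $r(A_n)\to r(A)$ and $\mu(A_n)\to\mu(A)$.

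First I would record, for every $n$, the elementary inequality $\mu(A_n)\le r(A_n)$. Then applying Proposition \ref{r_upperc} (upper semicontinuity of $r(\cdot)$) gives $\limsup_{n\to\infty}r(A_n)\le r(A)$, and applying Proposition \ref{mu_lower} (lower semicontinuity of $\mu(\cdot)$) gives $\liminf_{n\to\infty}\mu(A_n)\ge\mu(A)$.

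Next I would chain these, using the hypothesis $\mu(A)=r(A)$:
$$
r(A)=\mu(A)\le\liminf_{n\to\infty}\mu(A_n)\le\liminf_{n\to\infty}r(A_n)\le\limsup_{n\to\infty}r(A_n)\le r(A),
$$
so every inequality is an equality; in particular $\lim_{n\to\infty}r(A_n)=r(A)$, i.e. $r(\cdot)$ is continuous at $A$. For $\mu(\cdot)$, the same ingredients give
$$
\limsup_{n\to\infty}\mu(A_n)\le\limsup_{n\to\infty}r(A_n)=r(A)=\mu(A)\le\liminf_{n\to\infty}\mu(A_n),
$$
which forces $\limsup_{n\to\infty}\mu(A_n)=\liminf_{n\to\infty}\mu(A_n)=\mu(A)$, so $\mu(\cdot)$ is continuous at $A$ as well.

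I do not anticipate any real obstacle here: all the substance lies in Propositions \ref{r_upperc} and \ref{mu_lower} together with $\mu\le r$, and what remains is a routine sandwiching of $\liminf$ and $\limsup$. The only point meriting a line of care is the reduction from continuity to sequential continuity, which is immediate because $d$ comes from the norm $\|\cdot\|$.
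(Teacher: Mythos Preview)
Your proof is correct and is essentially identical to the paper's own argument: both take a sequence $A_n\to A$, combine Proposition~\ref{r_upperc}, Proposition~\ref{mu_lower}, and the pointwise inequality $\mu\le r$ in a squeeze to force $r(A_n)\to r(A)$, and then handle $\mu(\cdot)$ symmetrically. The only difference is that the paper leaves the $\mu$-case to ``a similar manner'' while you write it out explicitly.
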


\begin{proof}
Let $A_n\to A$. We have
$$
r(A)\ge\limsup_{n\to\infty} r(A_n)
$$
by the upper semi-continuity of $r(\cdot)$.
Furthermore,
$$
r(A)=\mu(A)\le\liminf_{n\to\infty}\mu(A_n)\le\liminf_{n\to\infty} r(A_n)
$$
by the lower semi-continuity of the function $\mu(\cdot)$. Hence $r(A_n)\to r(A)$ whenever $A_n\to A$.

The continuity of  $\mu(\cdot)$ at $A$ is proved in a similar manner.
\end{proof}

By Corollary \ref{cont} and Theorem \ref{e_i attained} the following result follows.

\begin{corollary}
Let $A\in \RR_+^{\infty\times\infty}$ and $r_{ess}(A)<r(A)$. Then the functions  $r(\cdot)$ and $\mu(\cdot)$ are continuous at $A$.
\end{corollary}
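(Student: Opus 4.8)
The plan is to simply chain together the two results quoted in the hint. The key observation is that the hypothesis $r_{ess}(A)<r(A)$ is exactly what is needed to place us in the situation covered by Corollary \ref{cont}.

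First I would invoke Theorem \ref{e_i attained}: since $r_{ess}(A)<r(A)$, that theorem yields (among other things) the equality $\mu(A)=r(A)$. At that point the matrix $A$ satisfies the single hypothesis of Corollary \ref{cont}, namely $\mu(A)=r(A)$. Applying Corollary \ref{cont} directly gives that both $r(\cdot)$ and $\mu(\cdot)$ are continuous at $A$, which is the assertion. No further estimates are required, since the quantitative content (upper semicontinuity of $r$ via Proposition \ref{r_upperc}, lower semicontinuity of $\mu$ via Proposition \ref{mu_lower}, and the squeeze between $\mu$ and $r$) has already been packaged inside Corollary \ref{cont}.

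There is essentially no obstacle here; the only thing to be careful about is that Theorem \ref{e_i attained} is stated with the normalization $r(A)=1$ in its proof, but its statement already records $\mu(A)=r(A)$ without any normalization, so it can be quoted as is. Thus the whole argument is: $r_{ess}(A)<r(A)\Rightarrow \mu(A)=r(A)$ (Theorem \ref{e_i attained}) $\Rightarrow$ $r(\cdot)$ and $\mu(\cdot)$ continuous at $A$ (Corollary \ref{cont}).

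\begin{proof}
By Theorem \ref{e_i attained}, the hypothesis $r_{ess}(A)<r(A)$ implies $\mu(A)=r(A)$. Hence Corollary \ref{cont} applies to $A$, and the functions $r(\cdot)$ and $\mu(\cdot)$ are continuous at $A$.
\end{proof}
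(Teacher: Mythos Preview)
Your proof is correct and follows exactly the same route as the paper, which simply records that the corollary follows from Corollary~\ref{cont} and Theorem~\ref{e_i attained}. There is nothing to add.
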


\begin{definition}{\rm 
Let $(X, d)$ be a metric space. A mapping $f: X \to \RR$ is called H\"{o}lder continuous (of order $\alpha >0$) if there exists a constant $C\ge 0$ such that  the inequality 
\begin{equation}
\label{Holder}
|f(x)-f(y)| \le C d(x,y)^{\alpha}
\end{equation}
holds for all $x,y \in X$. The map $f$ is called locally H\"{o}lder continuous (of order $\alpha$) if for each $z\in X$ there exist $\varepsilon >0$ and  $C\ge 0$ (which may depend on $z$) such that (\ref{Holder}) holds for all $x,y \in B(z, \varepsilon)$, where $B(z, \varepsilon)$ denotes the closed ball in $X$ with the center $z$ and the radius $\varepsilon$. If $f$ is  locally H\"{o}lder continuous of order $\alpha =1$, then it is called locally Lipschitz continuous.

}
\end{definition}

\begin{remark}{\rm 
It was proved in the proof of Proposition  \ref{r_upperc} that for each $A, B \in  \RR_+^{\infty\times\infty}$ and $k\in \NN$ we have
$$
\|A_{\otimes} ^k -B_{\otimes} ^k\|\le k\|A-B\|\cdot\max\{\|A\|^{k-1},\|B\|^{k-1}\}.
$$
Thus for each $k\in \NN$ the map $A \mapsto A_ {\otimes} ^k$ is  locally Lipschitz continuous and thus also the map $A \mapsto \|A_{\otimes} ^k\|$ is  locally Lipschitz continuous,
since
$$| \|A_{\otimes} ^k\|-\|B_{\otimes} ^k\| | \le \|A_{\otimes}^k-B_{\otimes} ^k\|\le k\|A-B\|\cdot\max\{\|A\|^{k-1},\|B\|^{k-1}\}.$$
Thus the map  $A \mapsto \|A_{\otimes} ^k\|^{1/k}$ is locally H\"{o}lder continuous of order $\frac{1}{k}$.
}
\end{remark}
However, the following example shows that 
the mapping $r(\cdot)$ is in general not locally Lipschitz continuous on the set $\{A \in  \RR_+^{\infty\times\infty}: r(A)=\mu(A)\}$.

\begin{example}{\rm 
Let $A=0\in \RR_+^{\infty\times\infty}$. Then $r(A)=\mu(A)=0$. For $n\in\NN$ and $\e>\e'>0$ let $B_{n,\e}$ and $C_{n,\e,\e'}$ be given by
$$
(B_{n,\e})_{i,i+1}=\e\qquad(i< n)
$$
$$
(B_{n,\e})_{i,j}=0\qquad({\rm otherwise})
$$
$$
(C_{n,\e,\e'})_{i,i+1}=\e\qquad(i< n)
$$
$$
(C_{n,\e,\e'})_{n,1}=\e'
$$
$$
(C_{n,\e,\e'})_{i,j}=0\qquad({\rm otherwise}).
$$
Then $\|A-B_{n,\e}\|=\|A-C_{n,\e,\e'}\|=\e$ and
$\|B_{n,\e}-C_{n,\e,\e'}\|=\e'$ for all $n,\e,\e'$. Moreover, $r(B_{n,\e})=0$ and $
r(C_{n,\e,\e'})=(\e^{n-1}\e')^{1/n}\to\e$ as $n\to\infty$.
So for all $L>0$ and $\e>0$ there exist $B,C$ with $\|A-B\|\le\e$, $\|A-C\|\le \e$ and $|r(B)-r(C)|>L\|B-C\|$.
}
\end{example}

In contrast to the finite dimensional case (\cite[Proposition 5.2(ii)]{GMW18}), $r(\cdot)$ is  
in general not locally H\"{o}lder continuous of  any order $\alpha >0$ (and thus it is not locally Lipschitz continuous) even on the set $\{A \in  \RR_+^{\infty\times\infty}: r(A)=\mu(A)>0 \}$.
\begin{example}
{\rm Let $\alpha >0$. 
Set $n_1=1$. For each $k\ge 2$ find $n_k$ such that $$\displaystyle{(1+k^{-1})^{\frac {n_k-1}{n_k}}k^{\frac{-2}{\alpha n_k}}>1+\frac{1}{2k}}.$$
Let $X$ be a Banach lattice isomorphic to $\ell^\infty$ with the standard basis $e_{i,j}=\chi _{\{(i,j)\}}\quad(i\in\NN, 1\le j\le n_i$).
Define $A:X_+\to X_+$ by $A\otimes e_{1,1}=e_{1,1}$,
$$
A\otimes e_{i,j}=\Bigl(1+\frac{1}{i}\Bigr)e_{i,j+1}\qquad(i\ge 2, 1\le j<n_i)
$$
$$
A \otimes e_{i,n_i}=0.
$$
Then $r(A)=\mu(A)=1$.

For $k\ge 2$ define $B_k$ by
$$
B_k \otimes e_{1,1}=e_{1,1},
$$
$$
B_k \otimes e_{i,j}=\Bigl(1+\frac{1}{i}\Bigr)e_{i,j+1}\qquad(i\ge 2, 1\le j< n_i)
$$
$$
B_k \otimes e_{i,n_i}=k^{\frac{-2}{\alpha}}e_{i,1}\quad(i\ge 2).
$$
Then $\|A-B_k\|=k^{-2/\alpha}$ for all $k$.
Moreover,
$$
\lim_{k\to\infty}\frac{|r(B_k)-r(A)|}{\|B_k-A\|^\alpha}
=\lim_{k\to\infty} k^{2}\Bigl( (1+k^{-1})^{\frac{n_k-1}{n_k}} k^{-2/(\alpha n_k)}-1\Bigr)\ge\lim_{k\to\infty}k^{2}\cdot\frac{1}{2k}=\infty.
$$
So the function $r(\cdot)$ is not  locally H\"{o}lder continuous of order $\alpha$.
}
\end{example}

\begin{remark} {\rm
  The following weaker statement than local  H\"{o}lder continuity of  $A\mapsto \mu (A)$ on the set  $\{A\in  R_+^{\infty\times\infty} : \mu (A)  >0\}$ holds 
(and a related statement holds also for the map $A\mapsto r (A)$). \\
Let  $\mu (B)>0$. If $\mu (B) > \varepsilon >0$ and $\mu (A)>0$, then 
\be
\mu(B)-\e   \le  \mu (A) +  k ^{1/k}\|A-B\|^{1/k}\cdot\max\{\|A \|^{\frac{k-1}{k}},\|B\|^{\frac{k-1}{k}}\}
\label{weaker1}
\ee
for some $k \in \NN$.

Indeed,  there exists a cycle  such that $B(i_1,i_k,\dots,i_2,i_1)\ge (\mu(B)-\e)^k$. It follows from the proof of Proposition \ref{r_upperc}  that 
$$(\mu(B)-\e)^k \le B(i_1,i_k,\dots,i_2,i_1) \le A(i_1,i_k,\dots,i_2,i_1)+  k\|A-B\|\cdot\max\{\|A\|^{k-1},\|B\|^{k-1}\}$$
and so 
$$\mu(B)-\e  \le \left( A(i_1,i_k,\dots,i_2,i_1)+  k\|A-B\|\cdot\max\{\|A\|^{k-1},\|B\|^{k-1}\} \right) ^{1/k}$$
$$ \le A(i_1,i_k,\dots,i_2,i_1)^{1/k}+  k ^{1/k}\|A-B\|^{1/k}\cdot\max\{\|A\|^{\frac{k-1}{k}},\|B\|^{\frac{k-1}{k}}\} $$
$$ \le \mu (A) +  k ^{1/k}\|A-B\|^{1/k}\cdot\max\{\|A\|^{\frac{k-1}{k}},\|B\|^{\frac{k-1}{k}}\}. $$

{\rm Similarly, it can be proved that if  $\mu(A)>0$ 
and $\mu (B) > \varepsilon >0$, then 
\be
\mu(B)+\e   \ge   \mu(A) -  k ^{1/k}\|A-B\|^{1/k}\cdot\max\{\|A \|^{\frac{k-1}{k}},\|B\|^{\frac{k-1}{k}}\}
\label{weaker1}
\ee
for some $k \in \NN$.}
}
\end{remark}
\bigskip

\vspace{3mm}

\baselineskip 5mm

{\it Acknowledgments.}
The first author was supported by grants No. 20-22230L 
of GA CR and RVO:67985840. 
The second author acknowledges a partial support of the Slovenian Research Agency (grants P1-0222, J1-8133, J2-2512 and J1-8155). \\

\vspace{2mm}

\noindent
Vladimir M\"uller\\
Institute of Mathematics, Czech Academy of Sciences \\
\v{Z}itna 25 \\
115 67 Prague, Czech Republic\\
email: muller@math.cas.cz

\bigskip

\noindent
Aljo\v sa Peperko \\
Faculty of Mechanical Engineering \\
University of Ljubljana \\
A\v{s}ker\v{c}eva 6\\
SI-1000 Ljubljana, Slovenia\\
{\it and} \\
Institute of Mathematics, Physics and Mechanics \\
Jadranska 19 \\
SI-1000 Ljubljana, Slovenia \\
e-mail : aljosa.peperko@fs.uni-lj.si


\begin{thebibliography}{9999}


\bibitem{ABG04} M. Akian, R. Bapat, and S. Gaubert,  Perturbation of eigenvalues of matrix pencils and optimal assignment problem. C. R. Acad. Sci. Paris, Serie I, 339, (2004), 103--108. 
E-print: arXiv:math.SP/0402438.

\bibitem{ABG05} M. Akian, R. Bapat, and S. Gaubert. Min-plus methods in eigenvalue perturbation theory
and generalised Lidskii-Vishik-Ljusternik theorem, 2005. E-print: arXiv:math.SP/0402090

\bibitem{AGS13} M. Akian, S. Gaubert and M. Sharify, Log-majorization of the moduli of the eigenvalues of a matrix polynomial by tropical roots,  Linear Algebra  
         Appl. 528 (2017), 394--435.  E-print: arxiv.org/abs/1304.2967 


\bibitem{AGW04} M. Akian, S. Gaubert, C. Walsh, Discrete max-plus spectral theory, in  Idempotent Mathematics and Mathematical Physics, G.L. Litvinov and V.P. Maslov, Eds, vol. 377 of Contemporary Mathematics, pp. 53--77, AMS, 2005.   E-print: arXiv:math.SP/0405225.

\bibitem{Appel} J. Appell, E. De Pascale and A. Vignoli,  Nonlinear Spectral Theory,  
  Walter de Gruyter GmbH and Co. KG,  Berlin, 2004.


 


\bibitem{BCOQ92} F.L. Baccelli, G. Cohen, G.-J. Olsder and J.-P.Quadrat,  Synchronization and linearity, John Wiley, 
Chichester, New York, 1992.


\bibitem{B98} R.B. Bapat, A max version of the Perron-Frobenius theorem,  Linear Algebra Appl.
        275-276 (1998), 3--18. 


\bibitem{BR97}  R.B. Bapat, T.E.S. Raghavan,  Nonnegative matrices and applications, Cambridge University Press, Cambridge, 1997.

\bibitem{BSD93} R. B. Bapat, D.P. Stanford, and P. van den Driessche, The eigenproblem in max algebra,
DMS-631-IR, University of Victoria, Victoria, British Columbia, 1993.

\bibitem{BSD95} R.B. Bapat, D.P. Stanford and P. van den Driessche, Pattern properties and spectral inequalities in 
       max algebra,  SIAM J. Matrix Anal. Appl. 16 (1995), 964--976.   



\bibitem{BNS13} D. A. Bini, V. Noferini and M. Sharify, Locating the eigenvalues of matrix polynomials,  {\rm SIAM J. Matrix Anal. Appl.}, 34(4), (2013), 1708--1727. E-print: arxiv.org/abs/1206.3632 




\bibitem{Bu10} P. Butkovi\v{c},  Max-linear systems: theory and algorithms, Springer-Verlag, London, 2010. 

\bibitem{BGC-G09}  P. Butkovi\v{c}, S. Gaubert, R.A. Cuninghame-Green, Reducible spectral theory with applications to the robustness of matrices in max-algebra, {\rm SIAM J. Matrix Anal. Appl.} 31(3) (2009), 1412--1431.  

\bibitem{BSST13}  P. Butkovi\v{c},  H. Schneider, S. Sergeev and B.-S. Tam, Two cores of a non-negative matrix, Linear Algebra Appl. 439 (2013), 1929--1954.








        


\bibitem{ED08} L. Elsner and P. van den Driessche, Bounds for the Perron root using max eigenvalues,  Linear Algebra Appl.
428 (2008), 2000--2005. 




\bibitem{EJS88} L. Elsner, C.R. Johnson and J.A. Dias Da Silva, 
  	The Perron root of a weighted geometric mean of nonnegative matrices,
	 Linear and Multilinear Algebra 24 (1988), 1--13.

\bibitem{GPZ} B. Gabrov\v{s}ek, A. Peperko, and J. \v{Z}erovnik, 
Independent Rainbow Domination Numbers of Generalized Petersen Graphs $P(n,2)$ and $P(n,3)$, Mathematics (2020) 8:996, https://doi.org/10.3390/math8060996

\bibitem{G92} S. Gaubert,  Th\'{e}orie des systemes lin\'{e}aires dans les dio\"{i}des, These, 
Ecole des Mines de Paris, 1992. 






\bibitem{GS08} S. Gaubert and M. Sharify, Tropical scaling of polynomial matrices,  Lecture Notes in Control and Information Sciences 389 (2009),
 291--303.


\bibitem{Grobler} J.J. Grobler, Spectral theory in Banach lattices, Operator theory in function spaces and Banach lattices, Oper. Theory Adv. Appl. 75, Birk\"auser, 1995.

\bibitem{GMW18} 
 N. Guglielmi, O. Mason, and F. Wirth, Barabanov norms, Lipschitz continuity and monotonicity for
the max algebraic joint spectral radius,  Linear Algebra Appl. 550 (2018), 37--58. E-print: arXiv:1705.02008v1


\bibitem{Gu94} J. Gunawardena, Cycle times and fixed points of min-max functions. In G. Cohen and J.-P. Quadrat, editors, 11th
 International Conference on Analysis and Optimization of Systems,   Springer LNCIS
199 (1994),  266--272. 


\bibitem{Heidergott} B. Heidergott, G. J. Olsder and J. van der Woudel, Max plus at work, Princeton Series in Applied Mathematics, 2006.


\bibitem{KSS12} R.D. Katz, H. Schneider and S. Sergeev. On commuting matrices in max algebra and in
nonnegative matrix algebra,  Linear Algebra  
         Appl. 436(2), (2012), 276--292.

\bibitem{KM97} V.N. Kolokoltsov and V.P. Maslov,  Idempotent analysis and its applications, Kluwer Acad. Publ., 1997. 

\bibitem{LN11} B. Lemmens, R.D. Nussbaum, Continuity of the cone spectral radius,  Proceedings of the AMS 141 (2013), 2741--2754. 




\bibitem{LM05} G.L. Litvinov and V.P. Maslov (eds.), Idempotent mathematics and mathematical physics,  Contemp. Math. Vol. 377, Amer.Math. Soc., Providence, 
RI, 2005. 



\bibitem{LY09} Y.Y. Lur and W.W. Yang, Continuity of the generalized spectral radius in max algebra,  Linear Algebra  
         Appl. 430 (2009), 2301--2311.

\bibitem{MN02} J. Mallet-Paret and R.D. Nussbaum, Eigenvalues for a class of homogeneous cone maps arising from max-plus 
operators,  Discrete and Continuous Dynamical Systems, vol 8, num 3 (2002), 519--562. 

\bibitem{MN10}  
 J. Mallet-Paret and R. D. Nussbaum, 
 Generalizing the Krein-Rutman theorem, measures of noncompactness and the fixed point index,
  J. Fixed Point Theory and Applications 7 (2010), 103--143.







\bibitem{MP12} V. M\"uller, A. Peperko, Generalized spectral radius and its max algebra version,
 Linear Algebra Appl. 439  (2013), 1006--1016.

\bibitem{MP15} V. M\"uller, A. Peperko, On the spectrum in max algebra,  Linear Algebra Appl. 485 (2015),  250--266.

\bibitem{MP17} 
V. M\"uller and A. Peperko, 
 On the Bonsall cone spectral radius and the approximate point spectrum,  Discrete and Continuous Dynamical Systems - Series A 37 (10) (2017), 5337--5364.

\bibitem{MP18} V. M\"uller and A. Peperko, Lower spectral radius and spectral mapping theorem for suprema preserving mappings, Discrete and Continuous Dynamical Systems - Series A 38 (8) (2018), 4117 -- 4132.

\bibitem{N81} R. D. Nussbaum, 
  Eigenvalues of nonlinear operators and the linear Krein-Rutman, 
  in: Fixed Point Theory (Sherbrooke, Quebec, 1980), E. Fadell and G. Fournier, editors, Lecture notes in Mathematics  886,  Springer-Verlag, Berlin (1981), 309--331.


\bibitem{N86} R.D. Nussbaum, Convexity and log convexity for the spectral radius,  Linear Algebra Appl. 73 (1986), 59--122.



\bibitem{P11} A. Peperko, On the continuity of the generalized spectral radius in max algebra,  Linear Algebra  
         Appl. 435 (2011), 902--907.


\bibitem{P06} A. Peperko, Inequalities for the spectral radius of non-negative functions,
{\rm Positivity} 13 (2009), 255-272.  



\bibitem{P08} A. Peperko, On the max version of the generalized spectral radius theorem,  Linear Algebra  
         Appl. 428 (2008), 2312--2318.


             






\bibitem{S07} G. B. Shpiz, An eigenvector existence theorem in idempotent analysis,  Mathematical Notes 
82, 3-4 (2007), 410--417.  

         
 \bibitem{Z} S. Khaleghzade, M. Zangiabadi, A. Peperko and M. Hajarian, Perron-Frobenius theory for some classes of nonnegative tensors in the max
algebra, submitted   

\end{thebibliography}
\end{document}